\newtheorem{theorem}{Theorem}
\newtheorem{lemma}{Lemma}
\newtheorem{remark}{Remark}
\newtheorem{corollary}{Corollary}
\newtheorem{example}{Example}
\newcommand{\EE}{\mathbb{E}}
\renewcommand{\d}{d}
\newcommand{\Var}{\operatorname{Var}}
\renewcommand{\to}{\longrightarrow}
\title{Bounds on the Poincar\'e constant for convolution measures}
\author{Thomas~A.~Courtade\\University of California, Berkeley}
\date{June 29, 2018}
\begin{document}

\maketitle

\begin{abstract}
We establish a Shearer-type inequality for the Poincar\'e constant, showing that the Poincar\'e constant corresponding to the convolution of a collection of measures can be nontrivially controlled  by the  Poincar\'e constants corresponding to convolutions of subsets of measures.    This implies, for example, that the Poincar\'e constant is non-increasing along the central limit theorem.   We also establish a dimension-free  stability estimate for subadditivity of the Poincar\'e constant  on convolutions which uniformly improves  an earlier one-dimensional estimate of a similar nature by Johnson (2004).  As a byproduct of our arguments, we find that the monotone properties of entropy, Fisher information and the Poincar\'e constant along the CLT find a common, simple root in Shearer's inequality. 
\end{abstract}

\section{Introduction}
Let $\mathcal{P}(\mathbb{R}^d)$ denote the set of Borel probability measures on $\mathbb{R}^d$.  
A  measure $\mu \in \mathcal{P}(\mathbb{R}^d)$ is said to satisfy a Poincar\'e inequality with constant $C$ if 
\begin{align}
\Var_{\mu}(f) \leq C \int_{\mathbb{R}^d} |\nabla f|^2 d\mu \label{PI}
\end{align}
 for all locally Lipschitz functions $f: \mathbb{R}^d\to \mathbb{R}$, where $\nabla$ denotes the usual gradient and $|\cdot|$ denotes the Euclidean length on $\mathbb{R}^d$.  The Poincar\'e constant $C_p(\mu)$ is defined to be the smallest constant $C$ for which \eqref{PI} holds.  
 
Poincar\'e inequalities play a central role in concentration of measure (see, e.g., \cite[Ch. 3]{ledoux2001concentration}), and imply dimension-free concentration inequalities for the product measures $\mu^n$, $n\geq 1$, which depend only on the Poincar\'e constant $C_p(\mu)$.  Indeed, it is an easy exercise to see that $C_p(\mu^n) = C_p(\mu)$, so the Poincar\'e inequality directly implies 
\begin{align*}
\Var_{\mu^n}(f) \leq  C_p(\mu) \|f \|^2_{\mathrm{Lip}}  ~~~~~~~~\forall f: \mathbb{R}^{nd} \to \mathbb{R},
\end{align*}
 where $ \|\cdot\|_{\mathrm{Lip}}$ is the usual Lipschitz seminorm.  Stronger concentration estimates are also available.  For example, Bobkov and Ledoux \cite{bobkov1997poincare} established  the following dimension-free estimate for exponential concentration 
 \begin{align}
 \mu^n\left( f \geq \int f d\mu^n + t\right) \leq \exp\left(-\min\left(\frac{t^2}{K C_p(\mu) }, \frac{t}{\sqrt{K C_p(\mu)} } \right) \right),\label{eq:dimFreeExpConcentration}
 \end{align}
 holding for all 1-Lipschitz $f$,  where $K$ is an absolute constant.   A  converse statement also holds, implying that dimension-free concentration is equivalent to existence of a Poincar\'e inequality in a precise sense \cite{gozlan2015dimension}. Thus, any information about $C_p(\mu)$ reveals quantitative information about the concentration properties enjoyed by $\mu$.   Beyond  concentration of measure, Poincar\'e inequalities  play an important role throughout analysis, for example in characterizing convergence of stochastic dynamics.

Except in special cases,  $C_p(\mu)$ is not known explicitly for general probability measures $\mu$, but it can sometimes be controlled using properties enjoyed by the Poincar\'e constant. For example, it is easy to check by change of variables that $C_p(\mu_{\alpha,\beta}) = \alpha^2 C_{p}(\mu)$, where $\mu_{\alpha,\beta}\sim \alpha X + \beta$, with $\alpha, \beta \in \mathbb{R}$ and $X\sim \mu$.    Only slightly less immediate is the subadditivity property 
\begin{align}
C_p(\mu\star\nu)\leq C_p(\mu) + C_p(\nu) \label{eq:ClassicalSubadditivity}
\end{align}
for the convolution measure $\mu\star\nu$, which follows by a classical variance decomposition and convexity of $t\mapsto t^2$ (e.g., \cite{borovkov1983inequality}).  

 It is convolution inequalities like \eqref{eq:ClassicalSubadditivity} that are the focus of this paper.   There have been several  recent results along these lines which we now mention.  For example, Bardet,   Gozlan,   Malrieu  and Zitt \cite{bardet2018functional} recently established dimension-free bounds on the Poincar\'e constant for {G}aussian convolutions of compactly supported measures.  Johnson  \cite{johnson2004convergence} had obtained similar bounds on the Poincar\'e constant for finite mixtures of one-dimensional Gaussians with identical variances, and  he further studied the convergence of the Poincar\'e constant along the central limit theorem.  This latter topic is closely related to some of the results contained in this paper, so we highlight the similarities and differences in the relevant sections.   In a related direction, Chafai and  Malrieu \cite{chafai2010fine} gave bounds on the Poincar\'e constant for two-point mixtures.  We remark that other bounds are known for the logarithmic Sobolev constant for convolution measures (which immediately yield bounds on the Poincar\'e constant), e.g. \cite{zimmermann2013logarithmic, bardet2018functional}, but these tend to be weaker than estimates which   target the Poincar\'e constant directly. 
 
Our results fall into two main categories.  First, we establish a Shearer-type inequality for the Poincar\'e constant, which shows that the Poincar\'e constant corresponding to the convolution of a collection of measures can be nontrivially controlled  by the Poincar\'e constants corresponding to convolutions of subsets of measures.    This has new and interesting consequences; for example, the Poincar\'e constant is monotone along the central limit theorem, similar to entropy.   Second, we establish a dimension-free quantitative stability estimate for the inequality \eqref{eq:ClassicalSubadditivity}, which depends on the measures $\mu, \nu$ only through their Poincar\'e constants.  This uniformly improves upon a previous estimate of Johnson  \cite{johnson2004convergence}, which  required a bound on Fisher information.  The proofs all rely on a particular variance inequality, closely related to Shearer's lemma, which may be of independent interest.  As a byproduct of our arguments, we see that the monotone property of entropy, Fisher information and the Poincar\'e constant along the CLT find a common root in Shearer's inequality.

\section{Presentation of Results}

\subsection{Bounds on the Poincar\'e constant for convolution measures}

Our first  result is the following bound on the Poincar\'e constant for convolutions:
\begin{theorem}\label{thm:monotoneSubsets}
Let $(\mu_i)_{1\leq i\leq n} \subset \mathcal{P}(\mathbb{R}^d)$.  For a   set $S\subset[n] :=\{1,2,\dots, n\}$, let $\mu_S$ denote the convolution of $(\mu_i)_{i\in S}$.  
If   $\mathcal{C}$ is a collection of distinct subsets of $[n]$, then 
\begin{align}
 C_p(\mu_{[n]}) \leq \frac{1}{t} \sum_{S \in \mathcal{C}} C_p(\mu_S) \label{eq:ConvolutionSubset}
\end{align}
where $t := \min_{i \in [n]} \#\{S \in \mathcal{C} : S\ni i\}$. 
\end{theorem}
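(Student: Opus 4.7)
The plan is to reduce Theorem~\ref{thm:monotoneSubsets} to a Shearer-type variance inequality for sums of independent random variables and then combine it with the Poincar\'e inequality for each $\mu_S$. I would fix independent $X_1,\dots,X_n$ with $X_i\sim\mu_i$, set $Z_S:=\sum_{i\in S}X_i\sim\mu_S$, and work with a locally Lipschitz $f\colon\mathbb{R}^d\to\mathbb{R}$ of finite variance under $\mu_{[n]}$; the goal is $\Var(f(Z_{[n]}))\le\tfrac{1}{t}\sum_{S\in\mathcal C}C_p(\mu_S)\,\EE|\nabla f(Z_{[n]})|^2$.

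The core intermediate result is an ANOVA-based Shearer-type variance inequality: for any $L^2$ function $g$ of the independent tuple $(X_i)_{i\in[n]}$,
\begin{align*}
\sum_{S\in\mathcal C}\Var\bigl(\EE[g\mid X_{S^c}]\bigr)\le(|\mathcal C|-t)\,\Var(g),
\end{align*}
equivalently $\sum_{S\in\mathcal C}\EE\bigl[\Var(g\mid X_{S^c})\bigr]\ge t\,\Var(g)$. I would prove this by expanding $g-\EE g$ in its Hoeffding/ANOVA decomposition $\sum_{A\neq\emptyset}U_A$, where the $U_A$ are pairwise orthogonal with $\EE[U_A\mid X_B]=0$ unless $A\subseteq B$. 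Then $\EE[g\mid X_{S^c}]-\EE g=\sum_{\emptyset\neq A\subseteq S^c}U_A$, so by orthogonality $\Var(\EE[g\mid X_{S^c}])=\sum_{\emptyset\neq A\subseteq S^c}\|U_A\|_2^2$. Summing over $S$ and interchanging orders reduces the inequality to the combinatorial fact that for each nonempty $A$, at most $|\mathcal C|-t$ sets $S\in\mathcal C$ are disjoint from $A$, which follows from the covering hypothesis by picking any $i\in A$ and noting $\{S:A\cap S=\emptyset\}\subseteq\{S:i\notin S\}$; comparing with $\Var(g)=\sum_{A\neq\emptyset}\|U_A\|_2^2$ closes the argument.

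With the lemma in hand, I would apply it to $g=f(Z_{[n]})$. The crucial observation is that $f(Z_{[n]})=f(Z_S+Z_{S^c})$ depends on $X_{S^c}$ only via $Z_{S^c}$, so $\EE[g\mid X_{S^c}]=\EE[f(Z_{[n]})\mid Z_{S^c}]$. For each fixed $y$, applying the Poincar\'e inequality for $\mu_S$ to the shifted function $x\mapsto f(x+y)$ and then averaging over $y\sim\mu_{S^c}$ gives $\EE[\Var(f(Z_{[n]})\mid Z_{S^c})]\le C_p(\mu_S)\,\EE|\nabla f(Z_{[n]})|^2$. Summing over $S\in\mathcal C$ and invoking the lemma delivers $t\,\Var(f(Z_{[n]}))\le\sum_{S\in\mathcal C}C_p(\mu_S)\,\EE|\nabla f(Z_{[n]})|^2$, which is exactly the desired Poincar\'e inequality. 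The main obstacle is identifying and proving the ANOVA-Shearer lemma; the remainder reduces to the tower property and a direct application of Poincar\'e. One may assume WLOG that all $C_p(\mu_S)<\infty$ (else the bound is vacuous), which guarantees $f(Z_{[n]})\in L^2$ and hence the availability of the ANOVA decomposition.
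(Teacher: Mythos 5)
Your proposal is correct, and the overall skeleton matches the paper's: condition on the variables outside a subset, use the classical decomposition of variance into $\EE[\Var(\cdot\mid\cdot)]+\Var(\EE[\cdot\mid\cdot])$, apply the Poincar\'e inequality for $\mu_S$ to the translated function $x\mapsto f(x+y)$ conditionally, and control the sum of projected variances by a Shearer-type counting bound (your formulation with $X_{S^c}$ and the constant $|\mathcal{C}|-t$ is exactly the paper's inequality after the relabeling $S\leftarrow[n]\setminus S$ that the paper performs at the end). Where you genuinely diverge is in the proof of the key variance lemma: the paper derives $\sum_{S}\Var(\EE[f(X)\mid X_S])\leq r\,\Var(f(X))$ by linearizing Shearer's lemma for relative entropy (perturbing a product measure via $dP=(1+\epsilon f)\,dQ$ and Taylor-expanding), which serves its broader point that monotonicity of entropy, Fisher information and the Poincar\'e constant share a common root in Shearer's inequality; you instead prove the equivalent statement directly through the Hoeffding/ANOVA decomposition, using orthogonality of the components $U_A$ and the combinatorial count $\#\{S\in\mathcal{C}:A\cap S=\emptyset\}\leq|\mathcal{C}|-t$ for $A\neq\emptyset$. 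This is essentially the Madiman--Barron ``variance drop'' route that the paper only mentions in a remark. Your route is self-contained and elementary, works at once for all $g\in L^2$ without an approximation/linearization step; the paper's route buys the conceptual bridge to information-theoretic subset inequalities. One small caveat: your claim that finiteness of the $C_p(\mu_S)$ alone guarantees $f(Z_{[n]})\in L^2$ for every locally Lipschitz $f$ with finite Dirichlet energy is not immediate (the complementary convolution factors need not have finite Poincar\'e constant); the clean fix is to first prove the inequality for bounded Lipschitz $f$ and conclude by a standard truncation/density argument, which is the same level of technicality the paper itself elides by working with smooth test functions.
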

As an example, if we take $n=2$ and $\mathcal{C} =\{ \{1\},\{2\}\}$, we see that \eqref{eq:ConvolutionSubset}   extends the classical subadditivity estimate \eqref{eq:ClassicalSubadditivity}.   Two further examples in the case of convolutions of identical measures --- where the expressions are simplest,  but the results still new and illustrative --- are given below:  %
\begin{example}\label{ex:monotoneCLT}
Let $(X_i)$ be i.i.d.~random vectors in $\mathbb{R}^d$ with law $\nu_1$.   For $n\geq 1$, let $\nu_n$ denote the law of the standardized sum $\frac{1}{\sqrt{n}} \sum_{i=1}^n X_i$.  Then, 
\begin{align}
C_p(\nu_n) \leq C_p(\nu_{n-1}).\label{CpCLT}
\end{align}
That is, the Poincar\'e constant is non-increasing along the central limit theorem. 
\end{example}
Like entropy and Fisher information, monotonicity of  $C_p(\nu_n)$ along the central limit theorem is suggested by the classical subaddivity property (though, not implied by it).  %
 The connection between \eqref{CpCLT} and monotonicity of entropy and Fisher information runs deep, and  is articulated in Section \ref{sec:varianceInequality} (Remark \ref{rmk:VarianceDrop}).
 
The following example is more counterintuitive, and does not seem to be predicted by subadditivity:
\begin{example}\label{ex:decreaseReg}
Let  $\nu_n$ be as in the previous example, and let $\gamma_{\delta^2}$ denote the law of the normal distribution $N(0,\delta^2 \mathrm{I})$.  Then
\begin{align}
C_p(\nu_n \star \gamma_{\delta^2/n})\leq C_p(\nu_1 \star \gamma_{\delta^2}).\label{decreaseReg}
\end{align}
The surprise here is that the degree of gaussian regularization on the left is significantly less than that on the right (i.e., variance $\delta^2/n$ instead of $\delta^2$), but the Poincar\'e constant is no worse. %
\end{example}

To add to Example \ref{ex:decreaseReg}, we remark that  discrete probability distributions  are typical examples of measures that do not satisfy a Poincar\'e inequality, since one can always find  a non-constant function $f$ which remains constant on the support, ensuring $\nabla f=0$ on sets of positive measure.  In Example \ref{ex:decreaseReg}, we can take $\nu_1$ to be the equiprobable measure on $\{-\tfrac{1}{2},\tfrac{1}{2}\}$, and $\delta^2\ll1$.  In this case, the measure $\nu_n \star \gamma_{\delta^2/n}$ looks nearly discrete as $n$ becomes fairly large (i.e., like a standardized Binomial$(n,1/2)$ distribution), yet satisfies a Poincar\'e inequality with constant depending only on $\delta^2$, e.g., $C_p(\nu_n \star \gamma_{\delta^2/n})  \leq C_p(\nu_n \star \gamma_{\delta^2/n}) \leq \delta^2 \exp(4 /\delta^2)$.   To contrast this concrete example with previous estimates, an application of \cite[Theorem 1.2]{bardet2018functional} gives $C_p(\nu_n \star \gamma_{\delta^2/n}) \leq \frac{\delta^2}{n} \exp(4 n^2 /\delta^2)$, and Johnson's estimate for univariate Gaussian mixtures  \cite[Theorem 1.4]{johnson2004convergence} gives $C_p(\nu_n \star \gamma_{\delta^2/n}) \leq 2^n \exp( n 2^n/\delta^2)$.   These bounds may be improved slightly  using subadditivity, but  nevertheless remain exponential in $n$. 

\subsubsection{Application to a quantitative CLT in $W_2$}
The ability to introduce vanishing regularization via \eqref{decreaseReg} may be useful in applications.  One nice illustration is the following dimension-free quantitative central limit theorem in  the $L^2$-Wasserstein distance on $\mathcal{P}(\mathbb{R}^d)$, denoted by $W_2$:
\begin{corollary}\label{cor:w2clt}
Let $(X_i)$ be i.i.d. centered isotropic random vectors in $\mathbb{R}^d$ with law $\nu_1$.  If $C_p(\nu_1\star \gamma_{\delta^2}) \leq C_{\delta^2}$ for some $\delta^2>0$, then 
\begin{align}
W_2(\nu_n,\gamma_1)^2 \leq d \frac{2(\delta^2 + C_{\delta^2})}{  \delta^2+\sqrt{n}-1} , \label{W2CLT}
\end{align}
where $\nu_n \sim \frac{1}{\sqrt{n}}\sum_{i=1}^n X_i$. 
\end{corollary}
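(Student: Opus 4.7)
The idea is to use Example~\ref{ex:decreaseReg} to pass from the possibly non-smooth measure $\nu_n$ to the Gaussian-smoothed $\tilde\nu_n := \nu_n \star \gamma_{\delta^2/n}$, whose Poincar\'e constant is uniformly bounded in $n$: Example~\ref{ex:decreaseReg} gives $C_p(\tilde\nu_n) \leq C_p(\nu_1 \star \gamma_{\delta^2}) \leq C_{\delta^2}$. The triangle inequality
$$W_2(\nu_n, \gamma_1) \leq W_2(\nu_n, \tilde\nu_n) + W_2(\tilde\nu_n, \gamma_1)$$
then reduces the problem to bounding each summand.

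The first is immediate from the synchronous coupling $(X, X+Z)$, with $X \sim \nu_n$ and $Z \sim \gamma_{\delta^2/n}$ independent: $W_2^2(\nu_n, \tilde\nu_n) \leq \mathbb{E}|Z|^2 = d\delta^2/n$. For the second, I observe that $\tilde\nu_n$ is itself the standardized sum of $n$ i.i.d.\ copies of $\nu_1 \star \gamma_{\delta^2/n}$ and enjoys a Poincar\'e inequality with constant $C_{\delta^2}$, so I would feed it into a Poincar\'e-based quantitative $W_2$-CLT estimate: for instance, via a Stein-kernel construction in the spirit of Courtade-Fathi-Pananjady or Ledoux-Nourdin-Peccati, or alternatively via an Ornstein-Uhlenbeck interpolation. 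Each of these should convert the Poincar\'e bound into a quantitative estimate on $W_2(\tilde\nu_n, \gamma_{1+\delta^2/n})$ depending on $\tilde\nu_n$ only through $C_{\delta^2}$, and the Gaussian-to-Gaussian piece $W_2(\gamma_{1+\delta^2/n}, \gamma_1)$ is then handled by direct computation.

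Combining via $(a+b)^2 \leq 2(a^2 + b^2)$ and some algebra should reproduce \eqref{W2CLT}. The main obstacle I anticipate is matching the specific denominator $\delta^2 + \sqrt{n} - 1$: the $\sqrt n$-rate suggests a somewhat crude estimate rather than the sharp $1/n$ Stein-discrepancy rate, likely obtained through an elementary variance or semigroup computation that uses $C_{\delta^2}$ together with the convolution structure directly, so that the final constant depends on $\nu_1$ only through $C_{\delta^2}$ and not through higher moments or Fisher information of the base measure.
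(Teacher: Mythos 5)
There is a genuine gap at the heart of your plan: the step where you ``feed $\tilde\nu_n$ into a Poincar\'e-based quantitative $W_2$-CLT estimate'' cannot work in the form you describe. The two natural ways to invoke such an estimate both fail. If you use the Poincar\'e constant of the \emph{summand} $\nu_1\star\gamma_{\delta^2/n}$ (as in the Stein-kernel bounds of Courtade--Fathi--Pananjady, which give $W_2^2 \lesssim d\, C_p(\text{summand})/n$), that constant is \emph{not} controlled by $C_{\delta^2}$: the hypothesis only bounds $C_p(\nu_1\star\gamma_{\delta^2})$, and for the paper's own test case ($\nu_1$ two-point, $\delta^2\ll 1$) the constant $C_p(\nu_1\star\gamma_{\delta^2/n})$ grows exponentially in $n/\delta^2$, so the $1/n$ factor is wiped out. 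If instead you use the Poincar\'e constant of the \emph{whole sum} $\tilde\nu_n$ (which, as you correctly note via Example~\ref{ex:decreaseReg}, is at most $C_{\delta^2}$), then the Stein-discrepancy/Poincar\'e bound only yields $W_2(\tilde\nu_n,\gamma)^2 \lesssim d\,(C_{\delta^2}-1)$, with no decay in $n$ at all. Your closing remark that the $\sqrt n$-rate should come from ``an elementary variance or semigroup computation'' is precisely the unproved step; nothing in your sketch produces any $n$-dependence in the second triangle-inequality term while using only $C_{\delta^2}$.

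The missing idea in the paper's proof is a block decomposition that interpolates between these two failing extremes. Write $n \geq n_1 n_2$ with $n_1=n_2=\lfloor\sqrt n\rfloor$, and view the (isotropically rescaled) smoothed sum as a standardized sum of $n_1$ i.i.d.\ blocks, each block distributed as $\nu_{n_2}^t\star\gamma_{t/(1+t)}$. The Stein discrepancy decays across blocks, $S(\nu^t_{n}\star\gamma_{t/(1+t)}\,|\,\gamma_1)^2 \leq \tfrac{1}{n_1} S(\nu^t_{n_2}\star\gamma_{t/(1+t)}\,|\,\gamma_1)^2$, which supplies the $1/\sqrt n$ rate; the block-level discrepancy is then bounded by $d$ times the block's Poincar\'e constant, and it is \emph{at the block level} that \eqref{decreaseReg} is applied: $C_p(\nu_{n_2}\star\gamma_t)\leq C_p(\nu_1\star\gamma_{n_2 t})$, so choosing $t=\delta^2/n_2$ makes the regularization inside each block exactly $\delta^2$ and lets the hypothesis $C_p(\nu_1\star\gamma_{\delta^2})\leq C_{\delta^2}$ enter. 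This simultaneous use of the Stein-discrepancy monotonicity (for decay in $n_1$) and of Example~\ref{ex:decreaseReg} within blocks of size $n_2$ (to keep the constant equal to $C_{\delta^2}$) is what produces the denominator $\delta^2+\sqrt n-1$ in \eqref{W2CLT}; your version, which applies Example~\ref{ex:decreaseReg} only to the full sum, cannot recover it. Your first triangle-inequality term (the coupling bound $d\delta^2/n$, or $d\,t/(1+t)$ after rescaling) is fine and matches the paper.
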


\begin{proof}
The proof involves properties of the so-called Stein discrepancy $S(\mu|\gamma_1)^2$, which is a measure of the distance from a probability measure $\mu$ to the standard gaussian measure $\gamma_1$.  Its precise definition is not needed here, but we will need two properties.  First, for any centered probability measure $\mu\in \mathcal{P}(\mathbb{R}^d)$, finiteness of $C_p(\mu)$ implies finiteness of $S(\mu|\gamma_1)^2$ (see \cite{courtade2017existence}).  Combined with results from \cite{ledoux2015stein}, if $\mu$ is centered and isotropic, then 
$$
W_2(\mu,\gamma_1)^2 \leq S(\mu|\gamma_1)^2 \leq (C_p(\mu)-1) d
$$
and, with the notation  $\nu_n$ prevailing, 
$$
S(\nu_{n+1}|\gamma_1)^2 \leq S(\nu_n|\gamma_1)^2 \leq \frac{1}{n}S(\nu_1|\gamma_1)^2 ~~~~\mbox{for all $n\geq 1$}.
$$

For $n\geq 1$, let $\nu^t_{n}\sim \frac{n^{-1/2}}{\sqrt{1+t}}\sum_{i=1}^n X_i$, so that $\nu^t_{n} \star\gamma_{t/(1+t)}$ is isotropic.
Starting with the triangle inequality for $W_2$ and using each of the above estimates followed by \eqref{decreaseReg}, we have  for any $t>0$ and integers $n_1,n_2\geq 0$ such that $ n_1 n_2\leq n$, 

\begin{align*}
\frac{1}{2}W_2(\nu_n, \gamma_1)^2 &\leq W_2(\nu^t_{n} \star\gamma_{t/(1+t)}, \nu_n)^2 +W_2(\nu^t_{n} \star\gamma_{t/(1+t)}, \gamma_1)^2 \\
&\leq  d \frac{t}{1+t} + S(\nu^t_{n} \star\gamma_{t/(1+t)} | \gamma_1)^2\\
&\leq d \frac{t}{1+t} + \frac{1}{n_1}S(\nu^t_{n_2} \star\gamma_{t/(1+t)} | \gamma_1)^2\\
&\leq d  \frac{t}{1+t} + \frac{d}{n_1} C_p(\nu^t_{n_2} \star\gamma_{t/(1+t)})  \\
&= d  \frac{t}{1+t} + \frac{d}{n_1}\Big( \frac{1}{1+t}C_p(\nu_{n_2} \star\gamma_{t}) \Big) \\
&\leq d  \frac{t}{1+t} + \frac{d}{n_1}\Big( \frac{1}{1+t}C_p(\nu_{1} \star\gamma_{n_2t}) \Big).%
\end{align*} 
Now,  choosing $t = \delta^2/n_2$ and $n_1 =n_2 = \lfloor \sqrt{n}\rfloor \geq \sqrt{n}-1$, we simplify to find \eqref{W2CLT}.
\end{proof}

A few remarks are in order:  
We emphasize that for \eqref{W2CLT} to hold, $\nu_1$ does not need to satisfy a Poincar\'e inequality, but only needs to have finite Poincar\'e constant after convolution with a gaussian of sufficiently large variance\footnote{In fact, if $\nu_1$ has $C_p(\nu_1)<\infty$, then we can take $\delta=0$, $n_1=n$ and $n_2=1$ in the the proof of Corollary \ref{cor:w2clt} to conclude $W_2(\nu_n, \gamma_1)^2=O(d C_p(\nu_1)/n)$, which can be found in \cite{courtade2017existence}. 
}.  This is a significantly weaker assumption than finiteness of the Poincar\'e constant; for instance, a simple modification of the proof of \cite[Theorem 1.2]{bardet2018functional} establishes that all subguassian distributions enjoy this property. %
Since $C_p(\mu^n) = C_p(\mu)$, the estimate \eqref{W2CLT} is dimension-free, and in fact has optimal dependence on dimension since $W_2^2$ is additive on product measures.  However, the rate $O(n^{-1/2})$ is suboptimal, and should be $O(1/n)$ under moment constraints.  In particular, Talagrand's inequality together with results of Bobkov, Chistyakov and G\"otze \cite{bobkov2014berry} imply  an asymptotic rate of $W_2(\nu_n,\gamma_1)^2 =O(1/n)$ under finite fourth moment, but these estimates are non-quantitative in dimension greater than one, and prefactors generally behave poorly in dimension. 

In \cite{zhai2018high}, Zhai improved an earlier result by Valiant and Valiant \cite{valiant2011estimating} and established that if $\nu_1$ is supported in the Euclidean ball of radius $R$, then it holds that
\begin{align}
W_2(\nu_n,\gamma_1)^2 \leq   \frac{25 d R^2 (1+\log n)^2}{ n}. \label{zhaiCLT}
\end{align}
Since any isotropic measure supported in the Euclidean ball of radius $R$ necessarily has $R^2\geq d$, Zhai's estimate gives at best $O(d^2)$ scaling in the upper bound on $W_2^2$, which is  worse than  \eqref{W2CLT}.  However, \eqref{zhaiCLT} does offer the improved rate of $O((\log n)^2/n )$ compared to the rate of $O(n^{-1/2})$ in \eqref{W2CLT}.  As a result, if one is working with compactly supported distributions, then  \eqref{W2CLT} would be preferred in the sample-limited regime where $n/ (\log n)^4 \lesssim d^2$, and \eqref{zhaiCLT} would be preferred in the large-sample regime where $n/ (\log n)^4 \gtrsim d^2$.  

We also remark that near-optimal rates have very recently been obtained for the multivariate CLT in the weaker 1-Wasserstein distance under the assumption of finite third moments, albeit with suboptimal dependence on dimension \cite{gallouet2018regularity}.

\subsubsection{Remark on non-Euclidean settings}
Poincar\'e inequalities continue to make sense in settings beyond $\mathbb{R}^d$, but the applications remain similar (see, e.g., \cite{gozlan2010poincare}).   For example, if $(\mathcal{X},d)$ is a metric space equipped with a probability measure $\mu$, it is common to say $\mu$ satisfies a Poincar\'e inequality with constant $C$ if 
\begin{align}
\Var_{\mu}(f)\leq C \int_{\mathcal{X}} |\nabla f|^2 d\mu \label{genPoincare}
\end{align}
for a sufficiently large class of test functions $f:\mathcal{X}\to \mathbb{R}$.  Here,  the length of the gradient is defined as 
\begin{align}
|\nabla f|(x) :=\limsup_{y\to x} \frac{|f(x)-f(y)|}{d(x,y)} \label{metricDerivative}
\end{align}
whenever $x$ is an accumulation point (otherwise $|\nabla f|(x) =0$).  Under this definition, the dimension-free concentration estimate \eqref{eq:dimFreeExpConcentration} continues to hold \cite{bobkov1997poincare}.  %

It turns out that Theorem \ref{thm:monotoneSubsets} can be extended to cover these and other situations.    
Let $(\mathcal{X},+)$ be an abelian group, where $\mathcal{X}$ is a Polish space, and let $\mathbb{B}(\mathcal{X},\mathbb{R})$ denote the collection of bounded real-valued functions on $\mathcal{X}$.  Consider a collection of   functions $\mathcal{A} \subset \mathbb{B}(\mathcal{X},\mathbb{R})$  which is closed under translation; i.e.,  $f \in \mathcal{A} \Leftrightarrow f(\cdot +t) \in \mathcal{A}$ for all $t\in \mathcal{X}$.  Further, let $\nabla$ be an operator on the elements of $\mathcal{A}$ which commutes with translation in the sense that   $|\nabla ( f(\cdot + t))|  = |(\nabla f)(\cdot +t)|$ for all $f\in \mathcal{A}$ and $t\in \mathcal{X}$.  In words, the length of the gradient of the map $x\mapsto f(x+t)$ is equal to the length of the gradient of $f$, evaluated at $x+t$.   Note that this condition is met for \eqref{metricDerivative}, assuming the metric $d$ is translation invariant.   It is also satisfied by discrete derivatives in symmetric settings (e.g., the hypercube). 

With the above definitions, for a given probability measure $\mu\in \mathcal{P}(\mathcal{X})$, define $C_p(\mu ; \mathcal{A})$ to be the smallest constant $C$ such that \eqref{genPoincare} holds for all $f\in \mathcal{A}$.  
\begin{theorem} \label{Thm:nonEuclidean}
Let the above notation prevail, and consider a collection of probability measures  $(\mu_i) \subset\mathcal{P}(\mathcal{X})$.  For $S\subset[n]$, let $\mu_S$ denote the law of $\sum_{i\in S} X_i$, where $X_i\sim \mu_i$ are independent and summation is with respect to the group operation $+$.  If   $\mathcal{C}$ is a collection of distinct subsets of $[n]$, then 
 $$
C_p(\mu_{[n]} ; \mathcal{A}) \leq \frac{1}{t} \sum_{S\in \mathcal{C}} C_p(\mu_{S} ; \mathcal{A})
$$
where $t := \min_{i \in [n]} \#\{S \in \mathcal{C} : S\ni i\}$. 
\end{theorem}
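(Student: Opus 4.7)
The plan is to follow the same route as in Theorem \ref{thm:monotoneSubsets}: the Euclidean proof uses only two features of $\mathbb{R}^d$, namely a Shearer-type variance inequality for independent random variables (the one promised in Section \ref{sec:varianceInequality}) and the translation invariance of the length of the gradient. The first is purely measure-theoretic and transplants unchanged; the second is exactly what the hypotheses on $(\mathcal{A},\nabla)$ encode in the present abelian-group setting.

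Concretely, I would fix $f\in\mathcal{A}$, take $X_1,\dots,X_n$ independent with $X_i\sim\mu_i$, and set $Y=\sum_{i=1}^n X_i$. The first step is, for each $S\in\mathcal{C}$, to apply the Poincar\'e inequality for $\mu_S$ conditionally on $(X_i)_{i\notin S}$. Fixing a realization $(x_i)_{i\notin S}$ and letting $t_S=\sum_{i\notin S}x_i$, the map $g(z):=f(z+t_S)$ lies in $\mathcal{A}$ by translation-closedness and satisfies $|\nabla g|(z)=|\nabla f|(z+t_S)$ by the commutation hypothesis on $\nabla$. Applying \eqref{genPoincare} for $\mu_S$ to $g$ and then averaging over $(X_i)_{i\notin S}$ yields
\begin{align*}
\EE\bigl[\Var\bigl(f(Y) \mid (X_i)_{i\notin S}\bigr)\bigr] \leq C_p(\mu_S;\mathcal{A}) \int |\nabla f|^2\, d\mu_{[n]}.
\end{align*}

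Next, I would sum over $S\in\mathcal{C}$ using the Shearer-type variance inequality
\begin{align*}
\Var\bigl(f(Y)\bigr) \leq \frac{1}{t} \sum_{S\in\mathcal{C}} \EE\bigl[\Var\bigl(f(Y) \mid (X_i)_{i\notin S}\bigr)\bigr],
\end{align*}
which is a statement purely about independent random variables on a product probability space and makes no reference to $\mathcal{X}$ or to any gradient. It can be verified directly via the Hoeffding/ANOVA decomposition $F:=f(X_1+\cdots+X_n)=\sum_{T\subseteq[n]}F_T$: one finds $\EE[\Var(F\mid (X_i)_{i\notin S})]=\sum_{T:\,T\cap S\neq\emptyset}\EE[F_T^2]$, and for any nonempty $T$ picking $i\in T$ gives $\#\{S\in\mathcal{C}:T\cap S\neq\emptyset\}\geq\#\{S\in\mathcal{C}:S\ni i\}\geq t$, so the inequality follows termwise from $\Var(F)=\sum_{T\neq\emptyset}\EE[F_T^2]$. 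Combining the two displays yields the theorem.

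The main point I expect to require care, rather than difficulty, is the measurability bookkeeping needed to make the conditional step rigorous: that $g$ genuinely lies in $\mathcal{A}$ when $t_S$ is a fixed element of $\mathcal{X}$ (immediate from the stated closure), and that $(z,(x_i)_{i\notin S})\mapsto|\nabla f|^2(z+t_S)$ is jointly measurable so that Fubini applies when averaging. Both appear implicit in the framework; once granted, no further obstacle remains and the proof is a direct transplant of that of Theorem \ref{thm:monotoneSubsets}.
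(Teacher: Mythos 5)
Your proposal is correct, and structurally it is the same argument the paper uses: the paper's proof of Theorem \ref{Thm:nonEuclidean} is declared identical to that of Theorem \ref{thm:monotoneSubsets}, namely the classical variance decomposition, the Shearer-type variance inequality, and a conditional application of the Poincar\'e inequality, with the translation hypotheses on $(\mathcal{A},\nabla)$ doing exactly the work you assign them (your conditioning on $(X_i)_{i\notin S}$ versus the paper's conditioning on $X_S$ followed by relabeling $S\leftarrow[n]\setminus S$ is only a cosmetic reshuffle, and your form of the variance inequality is the complementary-collection version of \eqref{varianceprojection}). The one genuine divergence is how you justify that inequality: the paper obtains \eqref{varianceprojection} by linearizing Shearer's relative-entropy inequality around the product measure, whereas you verify it directly through the Hoeffding/ANOVA decomposition, counting $\#\{S\in\mathcal{C}:T\cap S\neq\emptyset\}\geq t$ termwise. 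Both are valid; the entropy-linearization route is what lets the paper tie the Poincar\'e monotonicity to the entropy and Fisher information monotonicity under one roof (Remark \ref{rmk:VarianceDrop}), while your ANOVA argument is self-contained, elementary, and closer in spirit to the Madiman--Barron variance-drop inequality \eqref{MB:variancedrop} that the paper cites as the classical route. Your closing measurability caveats (that $f(\cdot+t_S)\in\mathcal{A}$ for each fixed $t_S$, and joint measurability of $|\nabla f|^2$ along translates so Fubini applies) are exactly the points the paper leaves implicit, and they pose no obstacle.
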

In applications, $\mathcal{A}$ will generally be dense in the class of test functions with respect to an appropriate norm.  For example, in the case of $\mathcal{X}=\mathbb{R}^d$ where $\nabla$ is the usual (weak) gradient, then Theorem \ref{thm:monotoneSubsets} follows from Theorem \ref{Thm:nonEuclidean} by density of smooth functions in the Sobolev space $W^{1,2}(\mathbb{R}^d,\mu)$, where $\mu$ has density with respect to Lebesgue measure.

\subsection{Stability of subadditivity of the  Poincar\'e constant}

For $\mu\in \mathcal{P}(\mathbb{R}^d)$, define 
$$
\sigma^2(\mu) := \max_{\alpha \in \mathbb{R}^d: |\alpha|=1} \Var_{\mu}(x\mapsto \alpha \cdot x)
$$
to be the largest variance of $\mu$ in any direction (equivalently, the largest eigenvalue of the covariance matrix for $\mu$).    It is known that $C_p(\mu)\geq \sigma^2(\mu)$, with equality only if $\mu$ is marginally gaussian in the direction of largest variance.  In fact, as shown in \cite{courtade2017existence},  if $\alpha^* \in \arg \max_{\alpha \in \mathbb{R}^d: |\alpha|=1} \Var_{\mu}(x\mapsto \alpha \cdot x)$, then 
\begin{align}
C_p(\mu) - \sigma^2(\mu)  \geq   W_2\Big( (\alpha^* \cdot \operatorname{id}) \# \mu, \gamma_{\sigma^2(\mu)}\Big)^2,\label{eq:W2bound}
\end{align}
where $\gamma_{\sigma^2}$ is the law of $N(0,\sigma^2(\mu))$, and $(\alpha^* \cdot \operatorname{id}) \# \mu$ is the pushforward of $\mu$ under the map $x\mapsto \alpha^* \cdot x$  (i.e.,  $(\alpha^* \cdot \operatorname{id}) \# \mu$ is the marginal distribution of $\mu$ in direction $\alpha^*$).   We remark that that the one-dimensional nature of \eqref{eq:W2bound} is unavoidable, since the Poincar\'e constant of $\mu$ is at least as bad as any one-dimensional marginal (e.g., consider product measures of the form $\gamma_{\sigma^2} \otimes \mu^{ n}$). 

Our main result of this section is a dimension-free stability estimate for the subadditivity property \eqref{eq:ClassicalSubadditivity} in terms of the gap $C_p(\mu\star\nu)-\sigma^2(\mu\star\nu)$ (and, by \eqref{eq:W2bound}, in terms of the non-gaussianness of any one-dimensional marginal of $\mu$ with largest variance). 
In particular, 
\begin{theorem}\label{thm:Cstable}
Let $\mu, \nu\in \mathcal{P}(\mathbb{R}^d)$, and define $\sigma^2 = \sigma^2(\mu\star\nu)$ for convenience. 
Then, 
$$
C_p(\mu\star \nu) \leq (C_p(\mu) + C_p(\nu) )    - \frac{C_p(\mu) C_p(\nu)}{C_p(\mu) + C_p(\nu)}  \frac{(C_p(\mu\star \nu) - \sigma^2)^2}{(C_p(\mu\star \nu) - \sigma^2)^2 + C_p(\mu\star \nu) \sigma^2}.$$
\end{theorem}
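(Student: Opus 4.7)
The plan is to refine the classical subadditivity proof by tracking an ANOVA decomposition of $\nabla f$, and then extract the $\sigma^2$-dependence via the Euler--Lagrange identity for a Poincar\'e extremal combined with Cauchy--Schwarz. Write $\alpha=C_p(\mu)$, $\beta=C_p(\nu)$, $C=C_p(\mu\star\nu)$, let $X\sim\mu$ and $Y\sim\nu$ be independent with $Z=X+Y$, and observe that, after rearrangement, the claim is equivalent to
\[
\alpha+\beta-C \;\geq\; \frac{\alpha\beta}{\alpha+\beta}\cdot\frac{(C-\sigma^2)^2}{(C-\sigma^2)^2+C\sigma^2}.
\]

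For the refined subadditivity, take any $f$ with $\EE|\nabla f(Z)|^2=1$ and decompose $\nabla f(Z)$ into its Hoeffding/ANOVA components under $\mu\otimes\nu$, $\nabla f(Z)=\Phi_0+\Phi_1(X)+\Phi_2(Y)+\Phi_{12}(X,Y)$, with $\Phi_0=\EE\nabla f$. Writing $E:=|\Phi_0|^2$ and $P_j:=\EE|\Phi_j|^2$, orthogonality gives $E+P_1+P_2+P_{12}=1$. Running the standard subadditivity proof by conditioning first on $Y$ and using $\nabla g(y)=\EE[\nabla f(Z)\mid Y=y]$ for $g(y):=\EE f(X+y)$ replaces the crude bound $\EE|\nabla g|^2\leq 1$ by the exact identity $\EE|\nabla g|^2=E+P_2$, yielding $\Var f\leq \alpha+\beta(E+P_2)$; reversing the roles of $X$ and $Y$ gives $\Var f\leq \beta+\alpha(E+P_1)$. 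Taking the convex combination with weights $\alpha/(\alpha+\beta)$ and $\beta/(\alpha+\beta)$ and using $P_1+P_2+2P_{12}=1-E+P_{12}\geq 1-E$ produces
\[
\alpha+\beta-\Var f \;\geq\; \frac{\alpha\beta}{\alpha+\beta}(1-E).
\]

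To lower bound $1-E$ for an extremal $f$ (so that $\Var f=C$), split $f=L+R$ with $L(z):=a\cdot z$, $a:=\EE\nabla f$, and $R:=f-L$, so that $\EE\nabla R=0$ and $\EE|\nabla R|^2=1-E$. The Euler--Lagrange condition $\Cov(f,\phi)=C\EE(\nabla f\cdot\nabla\phi)$ applied with $\phi=L$ collapses to $\Cov(L,R)=CE-\Var L$, and since $\Var L=a^T\Sigma a\leq \sigma^2 E$ by definition of $\sigma^2$, this yields $\Cov(L,R)\geq (C-\sigma^2)E$. Cauchy--Schwarz together with Poincar\'e for $\mu\star\nu$ applied to $R$ gives the matching upper bound $\Cov(L,R)\leq \sqrt{\Var L\cdot \Var R}\leq \sqrt{\sigma^2 E\cdot C(1-E)}$. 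Squaring the resulting chain $(C-\sigma^2)E\leq \sqrt{\sigma^2 C E(1-E)}$ and rearranging produces exactly $1-E\geq \frac{(C-\sigma^2)^2}{(C-\sigma^2)^2+C\sigma^2}$, which plugged into the refined subadditivity gives the theorem. The main technical obstacle is the possible non-existence of an extremal: it is handled by working with a near-extremal sequence $(f_n)$ and replacing the Euler--Lagrange identity by its quantitative consequence $\Cov(L_n,R_n)^2\leq (CE_n-\Var L_n)(C(1-E_n)-\Var R_n)$, which follows from applying the Poincar\'e inequality to the one-parameter family $sL_n+R_n$ for all $s\in\mathbb{R}$ and becomes asymptotically tight as $\Var f_n\to C$, after which one passes to the limit along a subsequence using that all relevant quantities are uniformly bounded.
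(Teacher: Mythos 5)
Your proposal is correct, and it shares the paper's two--stage architecture (a refined subadditivity bound that retains the ``nonlinearity'' of the near-extremizer, followed by a lower bound on that nonlinearity in terms of $C_p(\mu\star\nu)-\sigma^2$), but both stages are executed with different tools. For the first stage you use a Hoeffding/ANOVA decomposition of $\nabla f(Z)$; the paper instead invokes its Shearer-type variance inequality (obtained by linearizing Shearer's lemma). For two summands these are the same fact, and your bound $\alpha+\beta-\Var f\ge \frac{\alpha\beta}{\alpha+\beta}(1-E)$ is exactly the paper's intermediate inequality once one notes $\Var(\nabla f)=1-E$ under the normalization $\EE|\nabla f(Z)|^2=1$; so this difference is cosmetic. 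The genuine divergence is in handling the possible non-existence of an extremizer: the paper proves a quantitative ``stable Euler--Lagrange'' inequality via the Lax--Milgram theorem in a Sobolev space and then tests it against the linear function in direction $\EE\nabla f$, whereas you apply the Poincar\'e inequality to the pencil $sL_n+R_n$, extract the discriminant bound $\Cov(L_n,R_n)^2\le (CE_n-\Var L_n)\,(C(1-E_n)-\Var R_n)$, and let near-extremality finish the job. This is more elementary (no weak-solution machinery) and it does close, though the final step deserves to be written out: along a convergent subsequence, $\Var L_n+2\Cov(L_n,R_n)+\Var R_n=\Var f_n\to C$ gives $2K_\infty=u_\infty+v_\infty$ with $u=CE-\Var L$, $v=C(1-E)-\Var R$, and together with $K_\infty^2\le u_\infty v_\infty$ this forces $K_\infty=u_\infty=v_\infty$ by AM--GM; then $K_\infty=u_\infty\ge (C-\sigma^2)E_\infty$ and $K_\infty\le\sqrt{\sigma^2E_\infty\,C(1-E_\infty)}$ give exactly $1-E_\infty\ge \frac{(C-\sigma^2)^2}{(C-\sigma^2)^2+C\sigma^2}$, which plugged into the refined subadditivity yields the theorem. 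What the paper's route buys is a quantitative, per-function stability of the Euler--Lagrange equation (valid for every near-extremizer, not only in the limit), which is potentially reusable; what yours buys is brevity and the avoidance of Lax--Milgram.
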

Of note, the above estimate is dimension-free, and requires no quantitative information about the measures beyond their Poincar\'e constants and the value $\sigma^2(\mu\star\nu)$.  In order for equality to hold in \eqref{eq:ClassicalSubadditivity}, we need that $C_p(\mu\star \nu) = \sigma^2(\mu\star\nu)$, implying that $\mu\star \nu$ is marginally gaussian in its direction of maximum variance by \eqref{eq:W2bound}.  Cramer's theorem then implies that $\mu$ and $\nu$ must be marginally gaussian in this same direction as well.

Letting $\mu = \nu$ in Theorem \ref{thm:Cstable}, the following stability estimate for i.i.d. sums is immediate:
\begin{corollary}\label{cor:IIDstable}
Let $X_1,X_2$ be i.i.d.~random vectors in $\mathbb{R}^d$ with law $\nu_1$, and define $\nu_2$ to be the law of the standardized sum $\frac{1}{\sqrt{2}}(X_1 + X_2)$.  Then 
\begin{align}
C_p(\nu_2) \leq   C_p(\nu_1)    - \frac{C_p(\nu_1) }{4}  \frac{(C_p(\nu_2) - \sigma^2)^2}{(C_p(\nu_2) - \sigma^2)^2 + C_p(\nu_2)\sigma^2},\label{CstableIID}
\end{align}
where $\sigma^2 =\sigma^2 (\nu_2) = \sigma^2 (\nu_1)$.
\end{corollary}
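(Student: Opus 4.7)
The plan is to reduce Corollary \ref{cor:IIDstable} directly to Theorem \ref{thm:Cstable} by an appropriate rescaling, since the corollary is phrased as an ``i.i.d.\ sum'' statement while the theorem is phrased in terms of a generic convolution $\mu\star\nu$.

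First, I would introduce $\mu$ to denote the law of $X_1/\sqrt{2}$ (equivalently, of $X_2/\sqrt{2}$), so that $\nu_2 = \mu\star\mu$. The scaling property $C_p(\mu_{\alpha,\beta}) = \alpha^2 C_p(\mu)$ recalled in the introduction then gives $C_p(\mu) = \tfrac{1}{2} C_p(\nu_1)$. This is the only nontrivial observation I need.

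Next I would plug $\mu = \nu$ into Theorem \ref{thm:Cstable}. The two terms appearing in that theorem simplify as
\begin{align*}
C_p(\mu) + C_p(\nu) &= 2 \cdot \tfrac{1}{2} C_p(\nu_1) = C_p(\nu_1), \\
\frac{C_p(\mu) C_p(\nu)}{C_p(\mu) + C_p(\nu)} &= \frac{\bigl(\tfrac{1}{2} C_p(\nu_1)\bigr)^2}{C_p(\nu_1)} = \frac{C_p(\nu_1)}{4},
\end{align*}
and substituting these two identities into the conclusion of Theorem \ref{thm:Cstable} gives exactly \eqref{CstableIID}, provided we also identify $\sigma^2(\mu\star\mu) = \sigma^2(\nu_2)$ with $\sigma^2(\nu_1)$.

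The last remaining step is to verify the identity $\sigma^2(\nu_2) = \sigma^2(\nu_1)$. This is routine: for any unit vector $\alpha\in\mathbb{R}^d$, independence and the definition of $\nu_2$ yield $\Var_{\nu_2}(x\mapsto \alpha\cdot x) = \tfrac{1}{2}\bigl(\Var(\alpha\cdot X_1) + \Var(\alpha\cdot X_2)\bigr) = \Var(\alpha\cdot X_1)$, so that taking the maximum over $|\alpha|=1$ produces the claimed equality $\sigma^2(\nu_2) = \sigma^2(\nu_1)$. There is no real obstacle here; the corollary is essentially a bookkeeping specialization of Theorem \ref{thm:Cstable}, and the only thing to get right is the factor-of-two scaling induced by the standardization $\frac{1}{\sqrt{2}}(X_1+X_2)$.
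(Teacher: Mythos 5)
Your proposal is correct and matches the paper's intended argument: the paper simply notes that the corollary follows by letting $\mu=\nu$ in Theorem \ref{thm:Cstable}, and your version just carries out the routine bookkeeping (rescaling by $1/\sqrt{2}$ so that $C_p(\mu)=\tfrac12 C_p(\nu_1)$, and checking $\sigma^2(\nu_2)=\sigma^2(\nu_1)$) explicitly. Whether one rescales before or after invoking the theorem is immaterial; both give \eqref{CstableIID}.
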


Johnson's paper contains a result  similar to Corollary \ref{cor:IIDstable}, so we  describe  notable differences below.  First, we mention that our results hold for probability measures on $\mathbb{R}^d$, whereas Johnson's results are derived only for  dimension 1.  Perhaps more substantially, the stability estimate derived by Johnson depends on Fisher information, whereas ours does not.  Assuming $d=\sigma^2(\nu_1)=1$, Johnson's key result can be stated in a form comparable to \eqref{CstableIID} as\footnote{This result is not stated explicitly, but can be distilled from Eq. (3) of \cite{johnson2004convergence}.}
\begin{align}
C_p(\nu_2) \leq   C_p(\nu_1)    - \frac{C_p(\nu_1) }{9}  \frac{(C_p(\nu_2) - 1)^2}{C_p(\nu_2)^2 (1+ J(\nu_1) C_p(\nu_1)) },
\label{JohnsonStable}
\end{align}
where 
\begin{align*}
J(\mu) := \int_{\mathbb{R}} \frac{ |f'(x)|^2}{f(x)}dx 
\end{align*}
denotes the Fisher information associated to a probability measure $\mu$ on $\mathbb{R}$ with differentiable density $\d\mu(x) = f(x) dx$.   Since $J(\nu_1)\geq 1$ in this setting (by the Cramer-Rao inequality), we see that \eqref{CstableIID} always improves upon \eqref{JohnsonStable}, and the improvement can be significant when either $J(\nu_1)$ or  $C_p(\nu_1)$ is large.  As a particular example, our results apply to uniform measures on convex sets (a prototypical class of measures with finite Poincar\'e constant), whereas \eqref{JohnsonStable} degenerates to subaddivity   since Fisher information is infinite   due to discontinuity of the density at the boundary of its support (in fact, even under arbitrarily small  regularization, the Fisher information will still tend to infinity).   We additionally note that  Fisher information is additive on product measures, so a 
na\"ive extension of \eqref{JohnsonStable} to $\mathbb{R}^d$ would seem to suggest a stability estimate that degrades quickly with  dimension.

Johnson's motivation for establishing \eqref{JohnsonStable} was to quantify convergence of the Poincar\'e constants $C_p(\nu_n)$ along the CLT, where $\nu_n$ is the same as in Example \ref{ex:monotoneCLT}.  In particular, the main result of \cite{johnson2004convergence} claims for   $d=1$ and $\sigma^2(\nu_1)=1$, 
$$
C_p(\nu_n) \leq 1 + \frac{c}{n}, 
$$
where $c$ is a constant depending only on $C_p(\nu_1)$ and $J(\nu_1)$.    In actuality, however, this rate of convergence is  established  along the subsequence $(\nu_{2^n})$, rather than $(\nu_n)$ as desired.  That is, Theorem 1.2  of \cite{johnson2004convergence} should instead state 
\begin{align}
C_p(\nu_{n}) \leq 1 + \frac{c}{\log n}, \label{JohnsonDecay}
\end{align}
giving an effective rate of convergence of $O(1/\log n)$, rather than $O(n^{-1})$. The mistake appears to be due to a notational oversight in going from the proof of the theorem to the statement of the theorem itself, rather than a technical error.    %

In view of this, we take the opportunity to revisit the topic of convergence of the Poincar\'e constant.   Unfortunately, despite the improvements of \eqref{CstableIID} over \eqref{JohnsonStable}  described above, our Corollary \ref{cor:IIDstable} seems incapable of showing that $C_p(\nu_n)-1$ decays asymptotically better than $O(1/\log n)$, whereas a rate of $O(1/n)$ would naturally be conjectured as  optimal. Although we suffer from this shortcoming in the asymptotic regime, we can positively show that the Poincar\'e constant for standardized sums of random vectors converges  quickly to a universal constant (i.e., 3/2), before the slower convergence rate kicks in.  The precise statement is as follows:

\begin{theorem}\label{thm:CpConverge}
Let $X_1, X_2, \dots$ be i.i.d.~isotropic random vectors in $\mathbb{R}^d$, and define $\nu_n$ to be the law of the standardized sum $\frac{1}{\sqrt{n}}\sum_{i=1}^n X_i$.  
Then  
\begin{align}
C_p(\nu_{2^n})  -\frac{3}{2} \leq   \left(3 \over 4\right)^n \left(C_p(\nu_1)-\frac{3}{2}\right) \label{eq:CpConverge1}
\end{align}
and, if $C_p(\nu_{1}) \leq 2$, it further holds that 
\begin{align}
 C_p(\nu_{2^n}) -1 \leq  \frac{7}{n+7}. \label{eq:CpConverge2}
\end{align}
\end{theorem}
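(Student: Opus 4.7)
The plan is to derive both bounds by iterating the single-step stability estimate of Corollary~\ref{cor:IIDstable} along the doubling subsequence. Set $a_n := C_p(\nu_{2^n})$ and observe that $\nu_{2^{n+1}}$ is the law of $\tfrac{1}{\sqrt{2}}(Y+Y')$ for $Y,Y'$ i.i.d.~with law $\nu_{2^n}$; isotropy gives $\sigma^2(\nu_{2^n})=1$ at every stage, so Corollary~\ref{cor:IIDstable} yields one implicit recurrence between $a_n$ and $a_{n+1}$. Setting $b_n := a_n - 1 \geq 0$ and clearing denominators, I would rearrange this recurrence into the explicit form $b_n \geq \phi(b_{n+1})$ with
$$
\phi(u) \;:=\; \frac{u\,(4u^2 + 5u + 4)}{3u^2 + 4u + 4}.
$$
Example~\ref{ex:monotoneCLT} supplies $b_{n+1} \leq b_n$, and a direct computation shows $\phi$ is strictly increasing on $[0,\infty)$, so the bound $b_n \geq \phi(b_{n+1})$ is effectively a usable one-step recursion in the form $b_{n+1} \leq \phi^{-1}(b_n)$.

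For the first bound \eqref{eq:CpConverge1} I would establish the affine lower bound $\phi(u) \geq \tfrac{4}{3}u - \tfrac{1}{6}$ for all $u \geq 0$. Cross-multiplying, this reduces to nonnegativity of $6u(4u^2+5u+4) - (8u-1)(3u^2+4u+4)$, and a routine expansion collapses the difference to $(u-2)^2$ (times the positive denominator $6(3u^2+4u+4)$), so the estimate holds (with equality at $u=2$). Plugging $\phi(b_{n+1}) \geq \tfrac{4}{3}b_{n+1} - \tfrac{1}{6}$ into $b_n \geq \phi(b_{n+1})$ gives $b_{n+1} - \tfrac{1}{2} \leq \tfrac{3}{4}(b_n - \tfrac{1}{2})$, i.e.~$a_{n+1} - \tfrac{3}{2} \leq \tfrac{3}{4}(a_n - \tfrac{3}{2})$, and iterating from $n=0$ yields \eqref{eq:CpConverge1}.

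For the second bound \eqref{eq:CpConverge2}, the hypothesis $C_p(\nu_1) \leq 2$ gives the base case $b_0 \leq 1 = 7/7$, and I would prove $b_n \leq 7/(n+7)$ by induction. Since $\phi$ is increasing, the inductive step reduces to checking $\phi\!\left(\tfrac{7}{n+8}\right) \geq \tfrac{7}{n+7}$. Writing $z := \tfrac{7}{n+8} \in (0, \tfrac{7}{8}]$ and using $\tfrac{7}{n+7} = \tfrac{7z}{7-z}$, cross-multiplication reduces the target to
$$
(7-z)(4z^2+5z+4) \;-\; 7(3z^2+4z+4) \;=\; z(-4z^2 + 2z + 3) \;\geq\; 0.
$$
The quadratic $-4z^2+2z+3$ has positive root $(1+\sqrt{13})/4 \approx 1.15$, strictly above the relevant range $z \leq 7/8$, so the inequality holds and the induction closes.

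The main obstacle I anticipate is purely algebraic: finding the correct explicit form of the implicit recurrence (identifying $\phi$), and discovering the right affine/rational comparison functions so that the resulting polynomial differences factor as $(u-2)^2$ and $z(-4z^2+2z+3)$, respectively. Once these factorizations are spotted, everything else is bookkeeping — the only analytic inputs are monotonicity (Example~\ref{ex:monotoneCLT}) and the universal lower bound $C_p \geq \sigma^2 = 1$. The ``magic'' constants $\tfrac{3}{2}$ and $7$ appear to be tuned precisely so that these polynomial witnesses exist; any attempt to sharpen the rate $O(1/n)$ to $O(1/\log n)$ or improve the prefactor would require a genuinely different argument, as already noted in the discussion preceding the theorem.
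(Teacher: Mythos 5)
Your proposal is correct and follows essentially the paper's own argument: both iterate Corollary~\ref{cor:IIDstable} along the doubling subsequence with $\sigma^2=1$, and your affine bound $\phi(u)\ge\tfrac{4}{3}u-\tfrac16$ (via the $(u-2)^2$ factorization) is exactly the paper's linear lower bound producing the $\tfrac34$-contraction toward $\tfrac32$ in \eqref{eq:CpConverge1}. For \eqref{eq:CpConverge2} the paper relaxes the one-step drop to $\tfrac16(b-1)^2$ and compares with an auxiliary sequence solved by the quadratic formula, whereas you induct directly on the exact recurrence $b_n\ge\phi(b_{n+1})$ using monotonicity of $\phi$ — a cosmetic difference, since both reduce to verifying the same $7/(n+7)$ induction.
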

\begin{remark}
Using the monotone property $C_p(\nu_{n+1})\leq C_p(\nu_{n})$ established in \eqref{ex:monotoneCLT}, the above inequalities together give an explicit bound on the convergence of the sequence $C_p(\nu_{n})\searrow 1$, depending only on the initial Poincar\'e constant $C_p(\nu_1)$. %
\end{remark}
\begin{remark}
The i.i.d.~assumption on the sequence $(X_i)$ may be relaxed to independence with uniformly bounded Poincar\'e constants. 
\end{remark}

\section{Proofs of main results}

\subsection{A variance inequality}\label{sec:varianceInequality}
This section is  devoted to the proof of a particular variance inequality, from which our main results will follow.  First, recall that for a measurable space $\mathcal{X}$, and two probability measures $P\ll Q$ on $\mathcal{X}$, the relative entropy between $P$ and $Q$ is defined as
$$
D(P\|Q) = \int_{\mathcal{X}}\log\left( \frac{dP}{dQ}\right) dP.
$$

Our starting point is a projection-type inequality enjoyed by relative entropy known as Shearer's lemma (finding origins in \cite{chung1986some}), which generalizes inequalities due to Han  \cite{te1978nonnegative}.    Before stating the result, we establish some notation. Let $(\mathcal{X}_i, d_i)$, $i=1, \dots, n$, be a collection separable complete metric spaces.  If $P$ is a probability measure on the product space $\mathcal{X} = \prod_{i=1}^n\mathcal{X}_i$, let $P_{S}$ denote the  corresponding marginal distribution on $\mathcal{X}_S : = \prod_{i\in S}\mathcal{X}_i$, where $S\subset\{1,\dots, n\}$.  That is, $P_S = \pi_S \# P$, where $\pi_S : \mathcal{X} \to \mathcal{X}_S$ is the natural projection.   With this notation, Shearer's lemma is the following:
\begin{theorem} 
Let $P, Q$ be  Borel probability measures on $\mathcal{X}$, where $Q$ has product form $Q = \prod_{i=1}^n Q_i$. %
For any collection $\mathcal{C}$ of distinct subsets of $\{1, \dots, n\}$, 
\begin{align}
\sum_{S\in \mathcal{C} }  D(P_{S} \| Q_{S})  \leq r D(P\|Q),\label{shearerIneq}
\end{align}
where $r := \max_{i} \#\{ S\in \mathcal{C} : S\ni i\}$. 
\end{theorem}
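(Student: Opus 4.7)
The plan is to reduce \eqref{shearerIneq} to the chain rule for relative entropy combined with a Jensen-type ``conditioning increases divergence-to-product'' lemma. Since the $\mathcal{X}_i$ are Polish, regular conditional distributions exist, and the chain rule under the natural order on $[n]$ will give
$$D(P \| Q) = \sum_{i=1}^n D\!\left(P_{X_i \mid X_{[i-1]}} \,\big\|\, Q_i \,\big|\, P_{X_{[i-1]}}\right),$$
where the product structure of $Q$ is used to identify $Q_{i\mid [i-1]} = Q_i$. Similarly, for each $S = \{s_1 < \cdots < s_k\} \in \mathcal{C}$, the same rule applied inside $\mathcal{X}_S$ yields
$$D(P_S \| Q_S) = \sum_{j=1}^k D\!\left(P_{X_{s_j} \mid X_{s_1}, \ldots, X_{s_{j-1}}} \,\big\|\, Q_{s_j} \,\big|\, P_{X_{s_1}, \ldots, X_{s_{j-1}}}\right).$$

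The central step will be to show that, term-by-term, the $j$-th summand above is dominated by the $s_j$-th summand in the chain expansion of $D(P\|Q)$. Concretely, I would establish that for $A \subset A' \subset [n]\setminus\{i\}$,
$$D\!\left(P_{X_i \mid X_A} \,\big\|\, Q_i \,\big|\, P_{X_A}\right) \leq D\!\left(P_{X_i \mid X_{A'}} \,\big\|\, Q_i \,\big|\, P_{X_{A'}}\right).$$
This follows by disintegrating: $P_{X_i \mid X_A = \cdot}$ is the mixture of the finer kernels $P_{X_i \mid X_{A'} = \cdot}$ along the regular conditional law of $X_{A' \setminus A}$ given $X_A$, so convexity of $\rho \mapsto D(\rho \,\|\, Q_i)$ and Jensen's inequality give the bound pointwise in $X_A$, and averaging finishes it. The product structure of $Q$ is essential here: the reference $Q_i$ does not depend on the conditioning variables, which is precisely what makes the Jensen step available (otherwise the divergence would not even be convex in the right argument).

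Once this monotonicity is in place, each $S\in\mathcal{C}$ satisfies
$$D(P_S\|Q_S) \leq \sum_{i\in S} D\!\left(P_{X_i \mid X_{[i-1]}} \,\big\|\, Q_i \,\big|\, P_{X_{[i-1]}}\right),$$
and summing over $S$ while swapping the order of summation yields
$$\sum_{S\in\mathcal{C}} D(P_S\|Q_S) \leq \sum_{i=1}^n \#\{S\in\mathcal{C}:S\ni i\}\, D\!\left(P_{X_i \mid X_{[i-1]}} \,\big\|\, Q_i \,\big|\, P_{X_{[i-1]}}\right) \leq r\, D(P\|Q),$$
by the definition of $r$ and a second application of the chain rule. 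The main obstacle I anticipate is purely measure-theoretic bookkeeping: justifying the disintegration so that the chain rule and the Jensen step are applied to genuine regular conditional distributions on the product of Polish spaces, and handling the possibility that some of the conditional divergences are infinite (in which case the inequality is trivially preserved). Both are standard and should not cause real difficulty.
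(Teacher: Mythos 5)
Your proposal is correct and follows essentially the same route as the paper: a chain-rule expansion of $D(P_S\|Q_S)$ and $D(P\|Q)$, the observation that conditioning on a larger set of coordinates increases the conditional divergence to the fixed product reference $Q_i$ (the paper's first inequality, attributed to convexity of $D$, applied with $A=S\cap[i-1]$ and $A'=[i-1]$), and a final swap of summation using the definition of $r$. No substantive differences to report.
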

The  assumption that $\mathcal{X}$ is a product of separable complete metric spaces  ensures that the disintegration theorem  can be applied to $P$, which is needed for the proof.  This assumption is more than sufficient for our purposes, where we consider only the case where $\mathcal{X}_i = \mathbb{R}^d$.  Aside from this technical point, the proof is straightforward  and is included below for completeness.  
\begin{proof}
For an integer $k\geq 1$, define $[k] = \{1, \dots, k\}$, and $S^{k} = S\cap [k]$ for $S\subset[n]$.    Further, for $S,T\subset\{1,\dots, n\}$, let $P_{S|T}(\cdot | x)$ denote the conditional distribution of $P$ on $\mathcal{X}_S$, given $x \in \mathcal{X}_T$.   With notation established, the proof is a simple consequence of properties of relative entropy (cf. \cite{cover2012elements}):
\begin{align*}
\sum_{S\in \mathcal{C} }  D(P_{S} \| Q_{S})   
&= \sum_{S\in \mathcal{C}}   \sum_{i\in S}  \int_{\mathcal{X}_{S^{i-1}}} 
D\Big(P_{i | {S^{i-1}}}(\cdot|s) \Big\|Q_{i}(\cdot) \Big)  dP_{S^{i-1}}(s) \\
&\leq  \sum_{S\in \mathcal{C} }   \sum_{i\in S} \int_{\mathcal{X}_{[i-1]}} 
D\Big(P_{i | [i-1] }(\cdot|s) \Big\|Q_{i}(\cdot) \Big) d P_{ [i-1]} (s) \\
&=  \sum_{i=1}^n \sum_{S\in \mathcal{C}  : S\ni i }   \int_{\mathcal{X}_{[i-1]}}  
D\Big(P_{i | [i-1]}(\cdot|s) \Big\|Q_{i}(\cdot) \Big)  dP_{ [i-1]} (s)\\
&\leq   r \sum_{i=1}^n    \int_{\mathcal{X}_{[i-1]}}  D\Big(P_{i | [i-1]}(\cdot|s) \Big\|Q_{i}(\cdot) \Big)  dP_{ [i-1]} (s) \\
&=r D(P\|Q).
\end{align*}
The first inequality is due to convexity of $D$, and the second follows from the definition of $r$.
\end{proof}
The key inequality we shall need in the present paper is the following:
\begin{corollary}
With notation as above, let $Q$ be a  probability measure on $\mathcal{X}$ with product form, and for the random vector  $X = (X_1, \dots, X_n)\sim Q$, let $X_S = (X_i)_{i\in S}$ be the natural projection of $X$ onto $\mathcal{X}_S$. For any collection $\mathcal{C}$ of distinct subsets of $\{1, \dots, n\}$, and any $f:\mathcal{X}\to \mathbb{R}$, 
\begin{align}
\sum_{S \in \mathcal{C}}
\Var \left( \EE[f(X) | X_S ] \right) \leq r \Var( f(X)  ), \label{varianceprojection}
\end{align}
where the (conditional) expectation is with respect to $Q$ and $r := \max_{i\in[n]} \#\{ S\in \mathcal{C} : S\ni i\}$. 
\end{corollary}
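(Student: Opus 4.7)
The plan is to derive \eqref{varianceprojection} from Shearer's lemma \eqref{shearerIneq} by the standard linearization trick: relative entropy $D(P\|Q)$ behaves locally like a variance around $P=Q$, so applying \eqref{shearerIneq} to a measure close to $Q$ and extracting the quadratic term should produce precisely a variance inequality.

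First I would reduce to the case where $f$ is bounded with $\EE_Q[f]=0$. Replacing $f$ by $f-\EE_Q[f]$ leaves both sides of \eqref{varianceprojection} unchanged, and the general $L^2(Q)$ statement follows from the bounded one by truncating $f$ at level $M$ and letting $M\to\infty$, using that conditional expectation is an $L^2$-contraction so both sides depend continuously on $f$ in $L^2(Q)$.

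Next, for $\epsilon>0$ small enough that $1+\epsilon f>0$ everywhere, define a probability measure $P_\epsilon$ by $dP_\epsilon/dQ = 1+\epsilon f$. Using the expansion $(1+x)\log(1+x)=x+x^2/2+O(x^3)$ and $\EE_Q[f]=0$, one obtains
\[
D(P_\epsilon\|Q) \;=\; \tfrac{\epsilon^2}{2}\Var_Q(f) + O(\epsilon^3).
\]
Because $Q$ is a product measure, the marginal $(P_\epsilon)_S$ has density $1+\epsilon\,\EE_Q[f\mid X_S]$ with respect to $Q_S$ by the tower property, and the same Taylor expansion gives
\[
D\bigl((P_\epsilon)_S\,\bigl\|\,Q_S\bigr) \;=\; \tfrac{\epsilon^2}{2}\Var\bigl(\EE[f\mid X_S]\bigr) + O(\epsilon^3).
\]
Substituting these two expansions into \eqref{shearerIneq} applied to $P_\epsilon$ and $Q$, dividing by $\epsilon^2$, and letting $\epsilon\to 0$ then delivers \eqref{varianceprojection}.

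The main technical point is ensuring that the $O(\epsilon^3)$ remainders can be safely dropped in the limit, uniformly over the finite collection $\mathcal{C}$; this is why I truncate $f$ at the start, since $\|\EE[f\mid X_S]\|_\infty\leq\|f\|_\infty$ makes the implicit constants in each expansion uniform in $S$. As a self-contained alternative that avoids Shearer altogether, one could use the Hoeffding/ANOVA decomposition $f=\sum_{T\subseteq[n]}f_T$ of any $f\in L^2(Q)$ into pairwise orthogonal components, observe that $\EE[f\mid X_S]=\sum_{T\subseteq S}f_T$ by the product structure of $Q$, and then reduce \eqref{varianceprojection} to the combinatorial fact that for every nonempty $T\subseteq[n]$, $\#\{S\in\mathcal{C}:T\subseteq S\}\leq \#\{S\in\mathcal{C}:i\in S\}\leq r$ for any $i\in T$, after which summing the orthogonal norms yields the inequality directly.
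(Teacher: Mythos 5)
Your main argument is exactly the paper's proof --- linearize Shearer's inequality \eqref{shearerIneq} around $Q$ via $dP_\epsilon=(1+\epsilon f)\,dQ$ and extract the order-$\epsilon^2$ term --- just with the Taylor expansions, the marginal identity $d(P_\epsilon)_S/dQ_S=1+\epsilon\,\EE_Q[f\mid X_S]$, and the reduction to bounded centered $f$ spelled out explicitly, and it is correct (the only nitpick is that this marginal identity holds for arbitrary $Q$; the product form of $Q$ is needed only for Shearer's lemma itself). Your ANOVA alternative is also valid and is essentially the Madiman--Barron variance-drop route that the paper connects to in Remark \ref{rmk:VarianceDrop}, run directly instead of through relative entropy.
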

\begin{proof}
The proof follows by linearizing \eqref{shearerIneq}. 
We may assume $f :\mathcal{X}\to  \mathbb{R}$ is bounded with $\int f d Q = 0$; the general claim follows by density.   For sufficiently small $\epsilon$, define the probability measure $P$ via $d P = (1+\epsilon f) dQ$.  Now,  apply  \eqref{shearerIneq} and take Taylor series about $\epsilon =0$ to conclude \eqref{varianceprojection}. 
\end{proof}

Despite the fundamental nature of \eqref{varianceprojection}, we could not find any explicit appearance of it in the literature, though it may already be known to some.  In fact, as pointed out by Y.~Polyanskiy, Shearer's lemma holds for any non-negative submodular set function (however, the easiest way to verify the hypothesis for the set function $S\mapsto \Var \left( \EE[f(X) | X_S ] \right)$ may be through a linearization argument applied to entropy, as above). 
 In any case, we are aware of a few related results, which we now briefly discuss. 

\begin{remark}
A simple modification gives the following:  Let $X_1, \dots, X_n$ be mutually independent random vectors in $\mathbb{R}^d$, and define the random sums $U_S = \sum_{i\in S}X_i$.  In this case, for any collection $\mathcal{C}$ of distinct subsets of $\{1, \dots, n\}$ and $f:\mathbb{R}^d\to\mathbb{R}$, 
\begin{align}
\sum_{S \in \mathcal{C}}
\Var \left( \EE[f(U_{[n]}) | U_S] \right) \leq r \Var ( f(U_{[n]})  ),\notag
\end{align}
where, as before, $r := \max_{i\in[n]} 
\#\{ S\in \mathcal{C} : S\ni i\}$.  In particular, if $(X_i)$ are i.i.d., then a simple consequence is the inequality
\begin{align}
\Var \left( \EE[f(U_{[n]}) | U_{[m]}] \right) \leq \frac{m}{n} \Var ( f(U_{[n]})  ),   ~~~~~1\leq m\leq n,\notag
\end{align}
which is equivalent to the main result of Dembo, Kagan and Shepp in \cite{dembo2001remarks}. 
 \end{remark}

\begin{remark}\label{rmk:VarianceDrop}
As mentioned in the remarks following Example \ref{ex:monotoneCLT},  monotonicity of the Poincar\'e constant along the CLT parallels the same property enjoyed by entropy and Fisher information, first proved in \cite{artstein2004solution}.  In fact, the subset inequality of Theorem \ref{thm:monotoneSubsets} reminds one of similar subset inequalities enjoyed by entropy and Fisher information, which were proved by Madiman and Barron \cite{madiman2007generalized}.   This relationship is not coincidental, and we explain the connection here.  Specifically, the critical estimate needed in \cite{madiman2007generalized} is a ``variance drop" inequality of the form
\begin{align}
 \EE \left| \sum_{S\in \mathcal{C} } \psi_S(X_S) \right|^2 \leq  r \sum_{S\in \mathcal{C} }  \EE \Big|\psi_S(X_S)\Big|^2, \label{MB:variancedrop}
\end{align}
where the notation $X_S$ is the same as above for independent $(X_i)_{i\in [n]}$,  and $\psi_S :\mathcal{X}_S\to \mathbb{R}$, $S\in \mathcal{C}$,  are any  functions satisfying  $\EE \psi_S(X_S) = 0$ for each $S\in \mathcal{C}$. Madiman and Barron proved  \eqref{MB:variancedrop} using ANOVA decompositions, and apply it to monotonicity of entropy/Fisher information by setting $(\psi_S)_{S\in\mathcal{C}}$ to be score functions of partial sums.    As they noted in their paper, \eqref{MB:variancedrop} generalizes a classical result on $U$-statistics due to Hoeffding \cite{hoeffding1948class}.

Since \eqref{varianceprojection} plays a central role in the proof of Theorem \ref{thm:monotoneSubsets}, the connection between monotonicity of Poincar\'e constants  and  Fisher information/entropy along the CLT can  be realized through the connection between \eqref{varianceprojection} and \eqref{MB:variancedrop}.  In particular, a new proof of \eqref{MB:variancedrop} can be obtained from \eqref{varianceprojection} as follows:  Identifying $f(x) := \sum_{S\in \mathcal{C}} \psi_S(x_S)$ and applying Cauchy-Schwarz twice followed by \eqref{varianceprojection}, we have
\begin{align*}
  \EE \left| \sum_{S\in \mathcal{C} } \psi_S(X_S) \right|^2   &= 
 \frac{ \left(  \sum_{S\in \mathcal{C} } \EE f(X)  \psi_S(X_S) \right)^2 }{    \EE |f(X) |^2   }\\
 &\leq    \frac{ \left(  \sum_{S\in \mathcal{C} }  \left(  \EE | \EE[ f(X)|X_S]|^2 \right)^{1/2} \left( \EE |\psi_S(X_S)|^2 \right)^{1/2} \right)^2}{  \EE |f(X) |^2    } \\
 &\leq \left(   \frac{  {\sum_{S\in \mathcal{C} } }  \EE | \EE[ f(X)|X_S]|^2  }{    \EE |f(X) |^2    } \right) \left(\sum_{S\in \mathcal{C} }  \EE |\psi_S(X_S)|^2 \right)  \\
 &\leq   r \sum_{S\in \mathcal{C} }  \EE \Big|\psi_S(X_S)\Big|^2.
\end{align*}
Hence, the monotonicity results for entropy, Fisher information and the Poincar\'e constant are seen to have a common root in Shearer's inequality. 
\end{remark}

With \eqref{varianceprojection} in hand, the proof of Theorem \ref{thm:monotoneSubsets} now follows readily. 

\begin{proof}[Proof of Theorem \ref{thm:monotoneSubsets}]
For independent random vectors $X_i\sim \mu_i$, $i\in [n]$,   and   $S\subset [n]$, define $X_S = (X_i)_{i\in S}$, and let $U = \sum_{i=1}^n X_i$. 
Consider any smooth $f : \mathbb{R}^d\to \mathbb{R}$.  For any $S\subset [n]$, we have the classical variance decomposition
\begin{align*}
\Var( f ( U ) ) =  \EE[ \Var( f(U) | X_S ) ] + \Var\left( \EE[   f(U) |X_S] \right). 
\end{align*}
Summing over subsets $S\in \mathcal{C}$ and applying \eqref{varianceprojection}, we find 
\begin{align*}
|\mathcal{C}| \Var( f ( U ) ) &=  \sum_{S\in \mathcal{C}} \EE[ \Var( f(U) | X_S ) ] + \sum_{S\in \mathcal{C}} \Var\left( \EE[   f(U) |X_S] \right)\\
&\leq  \sum_{S\in \mathcal{C}} \EE[ \Var( f(U) | X_S ) ] + r  \Var\left(  f(U) \right),
\end{align*}
where $r := \max_{i \in [n]} \#\{ S\in \mathcal{C} : S\ni i\}$.   Rearranging and applying the Poincar\'e inequality for $\mu_{[n]\setminus S}$, $S\in \mathcal{C}$, we have 
\begin{align*}
(|\mathcal{C}|-r) \Var( f ( U ) ) \leq  \sum_{S\in \mathcal{C}} \EE[ \Var( f(U) | X_S ) ]  &\leq  \sum_{S\in \mathcal{C}}   \EE\Big[ C_p(\mu_{[n]\setminus S}) \EE\left[ |\nabla f(U)|^2| X_S\right]  \Big]\\
&=  \sum_{S\in \mathcal{C}}  C_p(\mu_{[n]\setminus S})   \EE\left[ |\nabla f(U)|^2 \right].
\end{align*}
Now, the proof is complete by relabeling sets $S\leftarrow [n]\setminus S$ since $
(|\mathcal{C}|-r) =  \min_{i \in [n]} \#\{S \in \mathcal{C} : ([n]\setminus S)\ni i\}$.
\end{proof}

\begin{remark}
The proof of Theorem \ref{Thm:nonEuclidean} is identical, and is therefore omitted. 
\end{remark}

\subsection{Proof of Theorems \ref{thm:Cstable} and \ref{thm:CpConverge}}
This section is dedicated to the proofs of Theorems  \ref{thm:Cstable} and \ref{thm:CpConverge}.  We begin with a technical lemma.    First, a remark on notation throughout this section: 

\begin{remark}  For a vector-valued function $g : \mathcal{X} \to \mathbb{R}^d$ and a probability measure $\mu$ on $\mathcal{X}$, we  abuse notation slightly and write $\Var_{\mu}(g)$ to denote $\sum_{i=1}^d \Var_{\mu}(g_i)$, where $g_i: \mathcal{X} \to \mathbb{R}^d$ denotes the $i$th coordinate of $g = (g_1, \dots, g_d)$.  In particular, 
$$
\Var_{\mu}(g) := \int |g|^2 \d\mu - \left| \int g \d\mu \right|^2.
$$
\end{remark}
\begin{lemma}\label{lem:VarBound}
Let $\mu$ be a probability measure on $\mathbb{R}^d$ which verifies a Poincar\'e inequality with best constant $C_p=C_p(\mu)$. Define $\sigma^2 := \sup_{\alpha : |\alpha| =1} \Var_{\mu}(x\mapsto \alpha\cdot x)$.  %
 There is a sequence $(f_n)$ of  real-valued functions on $\mathbb{R}^d$ with  $\int |\nabla f_n|^2d\mu = 1$, which satisfies 
$$%
\lim_{n\to\infty} \Var_{\mu}( f_n)   = C_p,
$$%
and 
$$
\lim_{n\to\infty}\Var_{\mu}(\nabla f_n ) \geq \frac{(C_p - \sigma^2)^2}{(C_p - \sigma^2)^2 + C_p \sigma^2}.%
$$
\end{lemma}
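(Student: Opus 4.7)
My plan is to take $(f_n)$ to be a sequence of near-extremizers of the Poincar\'e inequality---by the variational definition of $C_p$, there exist smooth $f_n$ with $\int |\nabla f_n|^2 d\mu = 1$ and $\Var_\mu(f_n) \to C_p$. Writing $v_n := \int \nabla f_n \, d\mu$ and $u_n := |v_n|$, the identity $\Var_\mu(\nabla f_n) = 1 - u_n^2$ reduces the lemma to an upper bound on $u_n^2$.

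The key step will be to probe the near-extremizer by applying the Poincar\'e inequality to the one-parameter family $g_n^\alpha := f_n - \alpha\, v_n \cdot x$, $\alpha \in \mathbb{R}$. With the shorthand $V_n := \Var_\mu(f_n)$, $s_n^2 := v_n^T \Sigma v_n$ (where $\Sigma$ is the covariance of $\mu$), and $c_n := \Cov_\mu(f_n, v_n \cdot x)$, expanding $\Var_\mu(g_n^\alpha) \leq C_p \int |\nabla g_n^\alpha|^2 d\mu$ and rearranging gives
\begin{align*}
C_p - V_n \;\geq\; 2\alpha(C_p u_n^2 - c_n) - \alpha^2(C_p u_n^2 - s_n^2) \qquad \text{for every } \alpha \in \mathbb{R}.
\end{align*}
The estimate $s_n^2 \leq \sigma^2 u_n^2$ yields $C_p u_n^2 - s_n^2 \geq (C_p - \sigma^2) u_n^2 \geq 0$, so the right-hand side is a downward-opening quadratic in $\alpha$; optimizing produces $(C_p u_n^2 - c_n)^2 \leq (C_p - V_n)(C_p u_n^2 - s_n^2)$.

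I would then combine this with the two routine bounds $|c_n| \leq \sqrt{V_n}\,s_n$ (Cauchy--Schwarz) and $s_n \leq \sigma u_n$ (definition of $\sigma^2$), and pass to the limit $V_n \to C_p$. Since $C_p u_n^2 - s_n^2 \leq C_p$, the displayed inequality forces $c_n = C_p u_n^2 + o(1)$, and feeding the bound $c_n \leq \sqrt{V_n}\,\sigma u_n$ back in yields $C_p u_n \leq \sqrt{V_n}\,\sigma + o(1)$ on any subsequence where $u_n$ is bounded away from zero (the subsequence $u_n \to 0$ is trivially fine). Hence $\limsup_n u_n^2 \leq \sigma^2/C_p$, so $\liminf_n \Var_\mu(\nabla f_n) \geq (C_p - \sigma^2)/C_p$, and the elementary inequality $(C_p - \sigma^2)/C_p \geq (C_p - \sigma^2)^2/\bigl((C_p - \sigma^2)^2 + C_p\sigma^2\bigr)$ (equivalent to $\sigma^4 \geq 0$) then delivers the stated bound.

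The main delicate point will be the asymptotic analysis: one needs to extract a convergent subsequence of $u_n$ and treat the vanishing and non-vanishing limits separately to control the $o(1)/u_n$ error in the last step. The degenerate case $C_p = \sigma^2$ (where $\mu$ is marginally Gaussian in its direction of maximum variance) makes both the optimization step and the target right-hand side trivial, so it can be dispatched immediately.
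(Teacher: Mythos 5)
Your proof is correct, and it takes a genuinely different route from the paper's. The paper's argument first establishes a stable form of the Euler--Lagrange equation for near-extremizers via the Lax--Milgram theorem (constructing $u_f$ with $C_p\int\nabla u_f\cdot\nabla h\,d\mu=\int fh\,d\mu$ and showing $\nabla u_f\approx\nabla f$ in $L^2(\mu)$, i.e.\ \eqref{prelimInequality1}), and only then tests against the linear function $h(x)=\alpha\cdot x$ with $\alpha=\int\nabla f\,d\mu$. You bypass that machinery entirely: applying the Poincar\'e inequality directly to the perturbed function $f_n-\alpha\,v_n\cdot x$ and optimizing the resulting quadratic in $\alpha$ yields the discriminant inequality $(C_p u_n^2-c_n)^2\leq (C_p-V_n)(C_p u_n^2-s_n^2)$, and your limiting analysis is sound, including the edge cases (the degenerate quadratic $C_pu_n^2=s_n^2$ with $u_n\neq 0$ forces $C_p=\sigma^2$, which you dispatch separately, and the subsequence split according to whether $u_n$ vanishes handles the division by $u_n$). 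Notably, your argument gives the stronger conclusion $\lim_n\Var_\mu(\nabla f_n)\geq (C_p-\sigma^2)/C_p$, which dominates the paper's constant: the comparison with $(C_p-\sigma^2)^2/\bigl((C_p-\sigma^2)^2+C_p\sigma^2\bigr)$ reduces, as you note, to $\sigma^4\geq 0$, so the paper's Cauchy--Schwarz bookkeeping is slightly lossier. What the paper's route buys is the reusable intermediate estimate: an approximate Euler--Lagrange identity valid for arbitrary test directions $h$, not only linear probes, which quantitatively substitutes for a possibly nonexistent extremizer; your more elementary quadratic-perturbation trick is tailored to the linear probe but fully suffices for the lemma. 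Two housekeeping points, both of which you already flag or which are shared with the paper: pass to a subsequence along which $\Var_\mu(\nabla f_n)$ converges so that the stated limits exist, and note that both proofs implicitly assume $\mu$ is nondegenerate ($\sigma^2>0$), the paper because it divides by $\sigma^2$, yours only in the trivial boundary case.
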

\begin{remark}
The idea here is that if $C_p > \sigma^2$, then there are near-extremizers of the Poincar\'e inequality for $\mu$ which have  nontrivial projection onto the space of nonlinear functions.  As a result, the variances $\Var_{\mu}(\nabla f_n )$ can be nontrivially compared to the moments $\int |\nabla f_n|^2 \d\mu$.  This result is suggested by Johnson for dimension 1 in \cite{johnson2004convergence}, but is only formally argued under the assumption that an extremizer exists for the Poincar\'e inequality.  The potential  nonexistence of extremizers is the main issue to be dealt with, and can be handled through an  application of the Lax-Milgram theorem, as  below. 
\end{remark}
\begin{proof}
We first show that  if $f$ is sufficiently smooth, satisfying   $\int f d\mu = 0$ and 
\begin{align}
\int |f|^2 d\mu  \geq (1-\epsilon^2)C_p \int |\nabla f|^2 \d\mu,  \label{fNearExtremal}
\end{align}
then 
\begin{align}
C_p \int \nabla f \cdot \nabla h \d\mu - \int f h d\mu 
\leq C_p \epsilon \left(\int |  \nabla f|^2d\mu \right)^{1/2}  \left(\int |\nabla h|^2 d\mu\right)^{1/2} \label{prelimInequality1}
\end{align}
for all sufficiently smooth $h$.   This may be seen as a stable form of the Euler-Lagrange equation associated to the Poincar\'e inequality for $\mu$.   Indeed, if there exists a nonzero function $f_0$ which $\Var_{\mu}(f_0) = C_p \int |\nabla f_0|^2
d\mu$, then 
$$
C_p \int \nabla f_0 \cdot \nabla h \d\mu = \int f_0 h d\mu 
$$
for all sufficiently smooth $h$. 

Toward establishing \eqref{prelimInequality1}, consider the Sobolev space $W^{1,2}$, defined as the closure of  the set of  functions $\{ f \in C^{\infty}(\mathbb{R}^d) : \int f d\mu = 0\}$ in $L^2(\mu)$ with respect to the Sobolev norm $\|f\| := \left( \int |\nabla f|^2 d\mu + \int |f|^2 d\mu\right)^{1/2}$.

By the Poincar\'e inequality, the continuous bilinear map $(f,g) \mapsto C_p \int \nabla f \cdot \nabla g d\mu$ is coercive on $W^{1,2}\times W^{1,2}$.  So, for any $f\in W^{1,2}$, the Lax-Milgram theorem ensures the existence of $u_f\in W^{1,2}$ such that
$$
C_p \int \nabla u_f \cdot \nabla h d\mu = \int f h d\mu ~~~~~~\mbox{for all~} h \in W^{1,2}.
$$
Applying this to the function $h = u_f$, Cauchy-Schwarz gives
$$
C_p \int |\nabla u_f|^2 d\mu =  \int u_f f d\mu \leq \left( C_p \int |\nabla u_f |^2 \d\mu \right)^{1/2}\left( \int | f |^2 \d\mu \right)^{1/2},
$$
so that $C_p \int |\nabla u_f|^2 d\mu  \leq   \int | f |^2 \d\mu$.  Now, if $f$ verifies \eqref{fNearExtremal}, then 
\begin{align*}
C_p \int |\nabla u_f - \nabla f|^2d\mu &=C_p \int |\nabla u_f  |^2d\mu +C_p \int |  \nabla f|^2d\mu -  2 C_p \int \nabla u_f \cdot  \nabla f d\mu \\
&=C_p \int |\nabla u_f  |^2d\mu +C_p \int |  \nabla f|^2d\mu -  2   \int | f|^2 d\mu \\
&\leq  \epsilon^2 C_p \int |  \nabla f|^2d\mu.
\end{align*}
As a consequence, we obtain 
\begin{align*}
C_p \int \nabla f \cdot \nabla h \d\mu - \int f h d\mu  &= C_p \int (\nabla f-\nabla u_f ) \cdot \nabla h d\mu\\
&\leq C_p \epsilon \left(\int |  \nabla f|^2d\mu \right)^{1/2}\left(\int |\nabla h|^2 d\mu\right)^{1/2},
\end{align*}
which is \eqref{prelimInequality1}.
 
Henceforth, we assume $f$ satisfies  $\int f d\mu = 0$ and \eqref{fNearExtremal}.   We may also assume without loss of generality that $\int x d\mu(x) =0$, since translation does not change the Poincar\'e constant.  For any $\alpha \in \mathbb{R}^d$,   definition of $\sigma^2$ together with the Poincar\'e  and Cauchy-Schwarz inequalities yields
\begin{align*}
\int f(x)  (\alpha \cdot x)  d\mu(x) -  |\alpha |^2 \sigma^2 
&= \int (f(x)-\alpha \cdot x)(\alpha \cdot x)d\mu(x)\\
&\leq \left( \int |f(x)-\alpha \cdot x|^2 d\mu(x)    \right)^{1/2}\left( \int |\alpha \cdot x|^2 d\mu(x)    \right)^{1/2}\\
&\leq  |\alpha| \sigma  \left( C_p \int |\nabla f -\alpha |^2 d\mu    \right)^{1/2}.%
\end{align*}
Applying \eqref{prelimInequality1} with $h(x) = \alpha \cdot x$, we conclude 
\begin{align}
C_p \int \nabla f \cdot \alpha d\mu(x) -  |\alpha |^2 \sigma^2 
&\leq |\alpha| \left( C_p \int |\nabla f -\alpha |^2 d\mu    \right)^{1/2} \sigma + C_p \epsilon \left(\int |  \nabla f|^2d\mu \right)^{1/2}  |\alpha|. \notag
\end{align}
Specializing this by taking $\alpha = \int \nabla f d\mu$, we find upon rearranging that 
\begin{align}
&\left(\int |  \nabla f|^2d\mu   -   \Var_{\mu}(\nabla f)  \right)^{1/2}   \left(  \sqrt{\frac{C_p}{\sigma^2}}  -     \sqrt{\frac{\sigma^2}{C_p}}  \right)\notag\\
&\leq   \left(  \Var_{\mu}(\nabla f)  \right)^{1/2}   + \epsilon \sqrt{\frac{C_p}{\sigma^2}} \left(\int |  \nabla f|^2d\mu     \right)^{1/2}  . \label{prelimInequality2}
\end{align}

At this point, the proof is essentially complete.  Indeed, by homogeneity of the Poincar\'e inequality, we can find a sequence $(f_n)\subset W^{1,2}$ such that  $\int |\nabla f_n|^2d\mu = 1$ and
$$%
\lim_{n\to\infty} \Var_{\mu}( f_n)  = C_p.
$$%
Since  $\Var_{\mu}(\nabla f_n ) \leq \int |\nabla f_n|^2d\mu = 1$, we may extract a subsequence for which the limit $\lim_{k\to\infty}\Var_{\mu}(\nabla f_{n_k} ) =:V_{\infty}$ exists.  Applying \eqref{prelimInequality2} to this subsequence, we may let $\epsilon\downarrow 0$ to conclude
 \begin{align}
\left(1   -   V_{\infty}  \right)^{1/2}   \left(  \sqrt{\frac{C_p}{\sigma^2}}  -     \sqrt{\frac{\sigma^2}{C_p}}  \right)
&\leq   \left( V_{\infty} \right)^{1/2}     .\notag
\end{align}
Squaring both sides and rearranging completes the proof. 
\end{proof}

Now, with the help of  Lemma \ref{lem:VarBound} and \eqref{varianceprojection}, we are ready to prove Theorem \ref{thm:Cstable}.

\begin{proof}[Proof of Theorem \ref{thm:Cstable}]
For convenience, we use probabilistic notation with $X\sim \mu$ and $Y\sim \nu$ independent, and $U = X+Y$. With this notation, we first aim to show that 
 \begin{align}
\Var(f(U)) &\leq  
(C_p(\mu) + C_p(\nu) )  \EE\left[|\nabla f(U)|^2 \right]  - \frac{C_p(\mu) C_p(\nu)}{C_p(\mu) + C_p(\nu)}  \Var(\nabla f(U)) \label{needToEliminateVariance}
\end{align}
for differentiable $f$.  To this end, we  consider a smooth test function $f : \mathbb{R}^d \to \mathbb{R}$; the general result  follows from density.   Without loss of generality, we may also assume   $\EE[f(U)]=0$.   We have the classical variance decomposition
\begin{align*}
\Var (f(U)) &= \EE \Var ( f(U)|X  )  + \Var\left( \EE[f(U)|X] \right)\\
&=: A + B.
\end{align*}

As in the proof of Theorem \ref{thm:monotoneSubsets}, since $\nu$ satisfies a Poincar\'e inequality with constant $C_p(\nu)$, the first term $A$ is bounded by 
\begin{align*}
A  &=\EE \Var ( f(U)|X  ) \leq \EE\left[ C_p(\nu) \EE[ |\nabla f(U)|^2 | X] \right]  = C_p(\nu) \EE\left[ |\nabla f(U)|^2  \right].  %
\end{align*}
Departing from the proof of Theorem \ref{thm:monotoneSubsets}, we bound the second term $B$ as
\begin{align*}
B &= \Var\left( \EE[f(U)|X] \right) \leq C_p(\mu) \EE \left| \nabla \EE[f(U)|X] \right|^2 = C_p(\mu) \EE \left|  \EE[ \nabla f(U)|X] \right|^2, %
\end{align*}
where moving the gradient inside the expectation is justified by smoothness of $f$. 
Written another way, we have
$$
B \leq C_p(\mu)\left( \Var(  \EE[ \nabla f(U)|X] )  + \left|\EE [\nabla f(U)] \right|^2 \right).
$$

By symmetry, we  obtain a similar bound with the roles of $\mu$ and $\nu$  (resp. $X$ and $Y$) reversed.  Hence, taking a convex combination of these two separate bounds, we  find
\begin{align*}
\Var(f(U)) &\leq \frac{C_p(\nu)^2 + C_p(\mu)^2}{C_p(\mu)+C_p(\nu)}  \EE\left[ |\nabla f(U)|^2  \right]  
+   2  \frac{C_p(\nu) C_p(\mu)}{C_p(\mu)+C_p(\nu)}    \left|\EE [\nabla f(U)] \right|^2   \\
&+  \frac{C_p(\nu) C_p(\mu)}{C_p(\mu)+C_p(\nu)}  \Big(     \Var(  \EE[ \nabla f(U)|X] )     +    \Var(  \EE[ \nabla f(U)|Y] )     \Big).
\end{align*}
Applying \eqref{varianceprojection} to the sum of  variance terms, we conclude \eqref{needToEliminateVariance} upon simplification.

At this point, we need to deal with the term $\Var(\nabla f(U))$ in \eqref{needToEliminateVariance} in order to bound $C_p(\mu\star \nu)$.  To this end,  Lemma \ref{lem:VarBound} ensures the existence of a sequence  $(f_n)$  with  $\EE\left[| \nabla f_n(U)|^2 \right] = 1$, which satisfies 
$$%
\lim_{n\to\infty} \Var( f_n(U))   = C_p(\mu\star\nu)
$$%
and 
\begin{align}
\lim_{n\to\infty}\Var(\nabla f_n(U) ) \geq \frac{(C_p(\mu\star\nu) - \sigma^2)^2}{(C_p(\mu\star\nu) - \sigma^2)^2 + C_p(\mu\star\nu) \sigma^2}, \label{varianceLB}
\end{align}
where $\sigma^2 :=\sigma^2(\mu\star\nu)$ as in the statement of the theorem.  Substituting this sequence into \eqref{needToEliminateVariance} and bounding the variance terms with \eqref{varianceLB} completes the proof.

\end{proof}
\begin{remark}
The above proof strategy does not appear to easily extend show stability of \eqref{eq:ConvolutionSubset}.  The reason is that the separate bounds on the quantities $A$ and $B$ cause  both Poincar\'e constants $C_p(\mu)$ and $C_p(\nu)$ to appear in bounding $C_p(\mu\star\nu)$.  
\end{remark}
\begin{remark} We remark that the above proof closely follows the development in \cite{johnson2004convergence}.  The key difference is that our use of the variance inequality \eqref{varianceprojection} avoids the introduction of Fisher information as in Johnson's proof. 
\end{remark}

At this point, we need only to prove Theorem \ref{thm:CpConverge}.  
\begin{proof}[Proof of Theorem \ref{thm:CpConverge}]
Starting with Corollary \ref{cor:IIDstable}, we have  
\begin{align*}
C_p(\nu_2)  &\leq  C_p(\nu_1)    - \frac{C_p(\nu_1) }{4 }  \frac{(C_p(\nu_2) - 1)^2}{(C_p(\nu_2) - 1)^2 + C_p(\nu_2)  } \\
&=C_p(\nu_1)\left(1 - \frac{1}{4}   \frac{(C_p(\nu_2) - 1)^2}{(C_p(\nu_2) - 1)^2 + C_p(\nu_2)  } \right) 
\end{align*}
since $\sigma^2(\nu_2)=1$ due to the isotropic assumption.  On rearranging, we find 
\begin{align}
C_p(\nu_1) - C_p(\nu_2) &\geq \frac{   C_p(\nu_2)  (C_p(\nu_2) - 1)^2  }{  3(C_p(\nu_2) - 1)^2 + 4 C_p(\nu_2)     } \notag \\
&\geq
\begin{cases}
\frac{1}{6}(C_p(\nu_2) - 1)^2 & 1 \leq C_p(\nu_2)< 2\\
\frac{1}{3}(C_p(\nu_2) - 1) - 1/6 & C_p(\nu_2)\geq 1,
\end{cases} \label{polyLB}
\end{align}
where the second inequality follows from elementary calculus.
Using the linear lower bound in \eqref{polyLB},  a straightforward inductive argument gives 
$$
C_p(\nu_{2^n})-1 \leq \frac{1}{2}\left(1-\left(3 \over 4\right)^n \right) + \left(3 \over 4\right)^n (C_p(\nu_1)-1),
$$
which is \eqref{eq:CpConverge1}.  

To establish \eqref{eq:CpConverge2}, construct a sequence $(a_n)_{n\geq 0}$ inductively starting with $a_0=2$, and defining $a_{n+1}$ to be the positive root of the quadratic equation
\begin{align}
a_{n+1} + \frac{1}{6}(a_{n+1} - 1)^2  = a_n, ~~~~n\geq 0. \label{quadRecurrence}
\end{align}
If $1\leq C_p(\nu_{1}) \leq 2$, the quadratic bound  in \eqref{polyLB} implies 
$$
C_p(\nu_{2^{n+1}}) + \frac{1}{6}(C_p(\nu_{2^{n+1}}) - 1)^2  \leq  C_p(\nu_{2^{n}})
$$
for $n\geq 0$, so that we necessarily  have $C_p(\nu_{2^{n}})\leq a_n$ for all $n\geq 0$.  Hence, we only need to upper bound the sequence $(a_n)_{n\geq 0}$. To that end, applying the quadratic formula to \eqref{quadRecurrence}, we have 
$$
a_{n+1} = -2 + \sqrt{9 + 6(a_{n}-1) }, ~~~~n\geq 0.
$$
We claim that $a_n \leq 1 + \frac{7}{n+7}$.  Indeed, this is true for $n=0$ by definition.  By induction, 
$$
a_{n+1} = -2 + \sqrt{9 + 6(a_{n}-1) } \leq -2 + \sqrt{9 + 6\frac{7}{n+7} } \leq  1 + \frac{7}{(n+1)+7},
$$
where the last inequality can be checked to hold for all $n\geq -1$.  This completes the proof. 
\end{proof}
 
 \subsection*{Acknowledgments}
 This work was supported in part by the France-Berkeley fund and NSF CCF-1750430.  The author thanks Max Fathi, Oliver Johnson and Michel Ledoux for  comments and discussions. We also would like to acknowledge the hospitality of the Modern Mathematical Methods for Data Analysis workshop held in Li\`ege Universit\'e, and the Workshop on Stability of Functional Inequalities and Applications at Universit\'e Paul Sabatier, Institut de Math\'ematiques de Toulouse.

%

\end{document}